\theoremstyle{plain}
\newtheorem{theorem}{Theorem}
\newtheorem{corollary}{Corollary}
\newtheorem{proposition}{Proposition}
\newtheorem{remark}{Remark}
\newtheorem{lemma}{Lemma}
\newtheorem{thm}{Theorem}
\newtheorem{cor}[thm]{Corollary}
\theoremstyle{definition}
\newtheorem{definition}{Definition}
\newcommand{\R}{\mathbb{R}}
\newcommand{\Z}{\mathbb{Z}}
\newcommand{\N}{\mathbb{N}}
\newcommand{\C}{\mathbb{C}}
\newcommand{\re}{\textnormal{Re}}
\DeclareMathOperator{\So}{SO}
\DeclareMathOperator{\U}{U}
\DeclareMathOperator{\Dim}{dim}
\DeclareMathOperator{\Id}{id}
\DeclareMathOperator{\Gl}{GL}
\DeclareMathOperator{\Bu}{BU}
\DeclareMathOperator{\Bs}{BSO}
\DeclareMathOperator{\Ke}{Ker}
\DeclareMathOperator{\Mod}{mod}
\begin{document}

\author{Rafael Torres}
\address{Scuola Internazionale Superiori di Studi Avanzati (SISSA)\\ Via Bonomea 265\\34136\\Trieste\\Italy}
\email{rtorres@sissa.it}

\title[Pseudo-holomorphic and CR embeddings]{An equivalence between pseudo-holomorphic embeddings into almost-complex Euclidean space and CR regular embeddings into complex space}

\subjclass[2010]{32Q60, 32V30, 32V40}

\keywords{pseudo-holomorphic embeddings, almost-complex and CR manifolds, CR regular points, complex points}

\maketitle

\emph{Abstract}: We show that a pseudo-holomorphic embedding of an almost-complex $2n$-manifold into almost-complex $(2n + 2)$-Euclidean space exists if and only if there is a CR regular embedding of the $2n$-manifold into complex $(n + 1)$-space. We remark that the fundamental group does not place any restriction on the existence of either kind of embedding when $n$ is at least three. We give necessary and sufficient conditions in terms of characteristic classes for a closed almost-complex 6-manifold to admit a pseudo-holomorphic embedding into $\R^8$ equipped with an almost-complex structure that need not be integrable.  

\section{Introduction and main results}

In this paper, we study existence of pseudo-holomorphic embeddings of almost-complex manifolds into almost-complex Euclidean space and CR regular embeddings into complex space. We begin with the discussion concerning the almost-complex realm. Let $M$ be a closed smooth manifold whose tangent bundle $TM$ admits a complex structure, i.e., an automorphism $J_M:TM\rightarrow TM$ such that $J_M^2 = - \Id$. The automorphism $J_M$ is called an almost-complex structure and the pair $(M, J_M)$ is called an almost-complex manifold. 

\begin{definition}\label{Definition PSEmbedding} Let $(M, J_M)$ and $(N, J_N)$ be almost-complex manifolds. A smooth embedding\begin{equation}\label{Mapf}f:(M, J_M)\hookrightarrow (N, J_N)\end{equation} is a pseudo-holomorphic embedding if and only if\begin{equation}Tf\circ J_M = J_N\circ Tf.\end{equation}That is, the embedding $f$ in (\ref{Mapf}) is a pseudo-holomorphic embedding if and only if the tangent map is complex linear at each point.\end{definition}

Let $(\R^{2m}, \hat{J})$ be Euclidean space of dimension $2m$ equipped with an almost-complex structure $\hat{J}$. The maximum principle says that the canonical integrable almost-complex structure  $\R^{2m}\cong \C^m$ does not contain compact complex submanifolds of positive dimension. Besides being non-canonical, the almost-complex structure $\hat{J}$ need not be integrable in the study of maps as in Definition \ref{Definition PSEmbedding}. Topological obstructions for the existence of pseudo-holomorphic embeddings of almost-complex 2m-manifolds into almost-complex Euclidean (4m + 2)-space were studied by Di Scala-Kasuya-Zuddas in \cite{[DKZ]}. The authors also studied  conditions for the existence of a pseudo-holomorphic embedding $(M, J_M)\hookrightarrow (\R^6, \hat{J})$ for an almost-complex manifold of (real) dimension four \cite[Theorem 5]{[DKZ]}. Their arguments can be modified by allowing $J_M$ to be changed in order to show that such an embedding exists if and only if the 4-manifold is parallelizable.
The constraint on the tangent bundle of a manifold that admits a codimension two pseudo-holomorphic embedding into an almost-complex Euclidean space in fact holds for arbitrary dimension.

\begin{thm}\label{Theorem ParaPS} If there are almost-complex structures $(M, J_M)$ and $(\R^{2n + 2}, \hat{J})$ such that there is a pseudo-holomorphic embedding $f:(M, J_M)\hookrightarrow (\R^{2n + 2}, \hat{J})$, then the $2n$-manifold $M$ is parallelizable. 

Let $M$ be a parallelizable 2n-manifold that smoothly embeds into $\R^{2n + 2}$. There exist almost-complex structures $(M, J_M)$ and $(\R^{2n + 2}, \hat{J})$ for which there is a pseudo-holomorphic embedding $f:(M, J_M)\hookrightarrow (\R^{2n + 2}, \hat{J})$.
\end{thm}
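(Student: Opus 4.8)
The plan is to prove the two assertions separately. For the necessity of parallelizability, suppose $f:(M,J_M)\hookrightarrow(\R^{2n+2},\hat J)$ is pseudo-holomorphic; we may take $M$ connected since $f$ restricts to a pseudo-holomorphic embedding on each component. Because $\R^{2n+2}$ is contractible the complex vector bundle $(T\R^{2n+2},\hat J)$ is trivial of complex rank $n+1$, so $(f^{*}T\R^{2n+2},f^{*}\hat J)\cong\underline{\C}^{\,n+1}$ over $M$. Since $f$ is pseudo-holomorphic, $Tf$ realizes $(TM,J_M)$ as a complex subbundle, and a Hermitian metric splits the resulting exact sequence of complex bundles, giving $(TM,J_M)\oplus_{\C}L\cong\underline{\C}^{\,n+1}$, where $L$ is the normal bundle $\nu$ of $f$ with the induced complex structure. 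Forgetting the complex structures, $TM\oplus\underline{\R}^{2}\cong\underline{\R}^{2n+2}$, so $TM$ is stably trivial; thus $M$ is a closed connected stably parallelizable manifold, and by the classical theorem that such a manifold is parallelizable if and only if its Euler characteristic vanishes, it will remain only to check that $\chi(M)=0$.

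For that, from $(TM,J_M)\oplus_{\C}L\cong\underline{\C}^{\,n+1}$ one has $c(TM)=c(L)^{-1}$, hence $c_{n}(TM)=(-1)^{n}c_{1}(L)^{n}=(-1)^{n}e(\nu)^{n}$ and $\chi(M)=\langle c_{n}(TM),[M]\rangle=(-1)^{n}\langle e(\nu)^{n},[M]\rangle$. I would then identify $\langle e(\nu)^{n},[M]\rangle$ with the signed count of transverse intersection points of $M$ with $n$ small generic push-offs of itself along $\nu$ inside $\R^{2n+2}$; since each push-off is smoothly isotopic in $\R^{2n+2}$ to a far translate of $M$, and such intersection numbers are invariant under ambient isotopy of each factor, this count equals the (empty) intersection of $n+1$ pairwise disjoint translates and so vanishes (for $n=0$ there is nothing to prove). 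Hence $\chi(M)=0$ and $M$ is parallelizable.

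For the converse, let $M^{2n}$ be parallelizable and let $g:M\hookrightarrow\R^{2n+2}$ be a smooth embedding. I would trivialize $TM$ and transport the standard complex structure of $\R^{2n}\cong\C^{n}$ to obtain $J_M$, so $(TM,J_M)\cong\underline{\C}^{\,n}$; the normal bundle $\nu$ of $g$ is oriented of rank two, and I would give it the complex structure $J_\nu$ determined by the orientation and a metric, writing $L$ for the resulting complex line bundle. Along $g(M)$ the splitting $T\R^{2n+2}|_{g(M)}\cong TM\oplus\nu$ then carries the complex structure $\hat J_{0}:=J_M\oplus J_\nu$, for which $Tg$ is complex linear, and the whole problem reduces to extending $\hat J_{0}$ to an almost-complex structure on all of $\R^{2n+2}$. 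The bundle of complex structures on $T\R^{2n+2}$ has contractible base, hence is $\R^{2n+2}\times\bigl(\So(2n+2)/\U(n+1)\bigr)$, and since $g(M)\hookrightarrow\R^{2n+2}$ is a closed cofibration into a contractible space, $\hat J_{0}$ extends if and only if its classifying map $\phi\colon g(M)\to \So(2n+2)/\U(n+1)$ is null-homotopic.

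The hard part will be exactly to arrange $\phi$ to be null-homotopic, and this is where parallelizability enters, together with the freedom — absent in the rigid, fixed-$J_M$ setting — to vary $J_M$, $J_\nu$ and the embedding $g$ itself. The route I would take is to choose $g$ so that its normal bundle is trivial: the normal bundle of any embedding of a parallelizable $M$ into $\R^{2n+2}$ is stably trivial of rank two, and one modifies $g$ to annihilate its normal Euler class, the sole obstruction to triviality of an oriented rank-two bundle. Once $\nu\cong\underline{\R}^{2}$, one picks trivializations of $TM$ and of $\nu$ whose direct sum agrees, up to homotopy, with the ambient trivialization of $T\R^{2n+2}|_{g(M)}$; then $\hat J_{0}$ is the constant complex structure of $\underline{\C}^{\,n+1}$, so $\phi$ is constant and hence null-homotopic, and $\hat J_{0}$ extends to the desired $\hat J$ on $\R^{2n+2}$. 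I expect the honest verification that $g$ can be taken with trivial normal bundle, and that these trivializations can be matched, to be the technical heart of this direction; the remaining steps are formal obstruction theory.
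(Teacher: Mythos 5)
Your first direction is essentially correct, though it takes a slightly different route from the paper: instead of invoking the fact that the normal bundle of a codimension-two embedding of a closed orientable manifold in Euclidean space is trivial (the paper cites Kirby, so that $c(TM)=1$ and $c_n(TM)=0$ at once), you keep $e(\nu)$ around and kill $\langle e(\nu)^n,[M]\rangle$ by a push-off/intersection count. That count argument works (it is the intersection number of one push-off with the $2$-manifold cut out by the others, hence depends only on homology classes in $\R^{2n+2}$, all of which vanish), but it is more roundabout than necessary; either way you land on stably parallelizable plus $\chi(M)=0$, and Kervaire's criterion finishes, as in the paper.

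The second direction, however, has a genuine gap, and it sits exactly where you flag ``the technical heart'': you assume that one can pick trivializations of $TM$ and of $\nu$ whose direct sum agrees up to homotopy with the ambient trivialization of $T\R^{2n+2}|_{M}$, and you defer its verification. This is not formal obstruction theory and does not follow from parallelizability of $TM$ plus triviality of $\nu$ alone: the difference between the summed framing and the ambient one is a map $d\colon M\to \So(2n+2)$, and the question is whether $d$ can be compressed, after changing the two trivializations by maps into $\So(2n)$ and $\So(2)$, into the subgroup $\So(2n)\times\So(2)$; equivalently, whether the associated generalized Gauss map $G\colon M\to V_2(\R^{2n+2})$ is nullhomotopic. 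That is precisely the nontrivial input the paper supplies via Kervaire's curvatura integra theorem: given the codimension-two embedding, the only obstruction to nullhomotoping $G$ is $\chi(M)$, which vanishes here, and the trace of the nullhomotopy straightens the splitting $TM\oplus\nu\subset M\times\R^{2n+2}$ to the constant one, so that $J_M\oplus J_\nu$ (defined by pulling back the standard structure through this identification, not by an arbitrary trivialization of $TM$) has nullhomotopic classifying map $M\to\widetilde{\Gamma}(n+1)$, and the extension lemma of Di Scala--Zuddas applies. Without Kervaire's theorem (or an equivalent argument) your matching step is unproven, so the converse implication is not established. A minor additional remark: your proposed modification of $g$ ``to annihilate its normal Euler class'' is unnecessary, since the normal bundle of any embedding of a closed orientable $2n$-manifold in $\R^{2n+2}$ is already trivial.
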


Necessary and sufficient conditions for the existence of a pseudo-holomorphic embedding of 6-manifolds are as follows. The first Pontrjagin class is denoted by $p_1$, and the third Chern class by $c_3$.

\begin{thm}\label{Theorem 6D} Let $M$ be a closed smooth simply connected 6-manifold with torsion-free homology $H_\ast(M)$ and second Stiefel-Whitney class $w_2(M) = 0$. Let $(M, J)$ be a given almost-complex structure and assume $c_1(M, J) = 0$. There is a pseudo-holomorphic embedding\begin{equation}(M, J)\hookrightarrow (\R^8, \hat{J})\end{equation}for some almost-complex structure $\hat{J}$ on $\R^8$ if and only if\begin{equation}c_3(M) = 0 = p_1(M).\end{equation}
\end{thm}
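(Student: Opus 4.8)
The plan is to deduce the statement from Theorem~\ref{Theorem ParaPS} by converting its two hypotheses --- that $M$ be parallelizable and that $M$ smoothly embed in $\R^{8}$ --- into the condition $p_{1}(M)=0=c_{3}(M)$. Throughout, $c_{3}(M)$ denotes the top Chern class of any almost-complex structure on the (necessarily orientable) $6$-manifold $M$; it equals the Euler class $e(TM)$ and so does not depend on the chosen structure. Necessity is then immediate: by Theorem~\ref{Theorem ParaPS} a pseudo-holomorphic embedding $(M,J)\hookrightarrow(\R^{8},\hat J)$ forces $M$ to be parallelizable, and a trivial bundle has vanishing Pontrjagin and Euler classes, whence $p_{1}(M)=0$ and $c_{3}(M)=e(TM)=0$.

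\emph{Sufficiency, parallelizability.} Assume $p_{1}(M)=0=c_{3}(M)$. Since $c_{1}(M,J)=0$, the identity $p_{1}=c_{1}^{2}-2c_{2}$ gives $2c_{2}(M,J)=0$, hence $c_{2}(M,J)=0$ because $H^{4}(M;\Z)$ is torsion-free, while $c_{3}(M,J)=c_{3}(M)=0$. Thus $TM$, as a complex rank-$3$ bundle, has all Chern classes equal to zero; over the $6$-complex $M$ this forces it to be complex-trivial, the even-degree obstructions to triviality being the Chern classes and the odd-degree ones vanishing since $\pi_{i}(BU(3))=0$ for $i\in\{1,3,5\}$. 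In particular $TM$ is a trivial real bundle, so $M$ is parallelizable. (Equivalently: $w_{2}(M)=0$ and $p_{1}(M)=0$ with $H^{4}(M;\Z)$ torsion-free make $TM$ stably trivial, and for a closed manifold of even dimension the remaining obstruction to de-stabilizing a stably trivial tangent bundle is the Euler class, here $e(TM)=c_{3}(M)=0$.)

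\emph{Sufficiency, embedding and adaptation of $J$.} Next one must produce a smooth embedding $f\colon M\hookrightarrow\R^{8}$ with trivial normal bundle. As $\Omega^{\So}_{6}=0$, the manifold $M$ bounds a compact oriented (indeed spin) $7$-manifold, which by surgery in the interior may be made highly connected relative to $M$; reading a handle decomposition from the boundary presents $M$ as obtained from $S^{6}\subset S^{7}\subset\R^{8}$ by surgeries of low index, and these can be realised ambiently using the unknottedness of low-dimensional spheres in $\R^{8}$ together with the normal framings furnished by parallelizability. (Alternatively one invokes the classification and codimension-two embedding theory of simply connected spin $6$-manifolds with torsion-free homology.) Given such an $f$, Theorem~\ref{Theorem ParaPS} produces a pseudo-holomorphic embedding for \emph{some} pair of almost-complex structures; to obtain it for the prescribed $J$, observe that the structure needed on $\R^{8}$ along $f(M)$ is a complex structure on $f^{*}T\R^{8}=Tf(TM)\oplus\nu$ restricting to $(TM,J)$ on the first summand, and both summands are trivial complex bundles: $(TM,J)$ by the previous paragraph, and $\nu$ because it is an oriented rank-$2$ bundle chosen to be trivial. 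Hence $f^{*}T\R^{8}$ with this structure is a trivial complex bundle, so its classifying map $M\to\So(8)/\U(4)$ is null-homotopic and the structure extends over the contractible $\R^{8}$. This is consistent with the output of Theorem~\ref{Theorem ParaPS} because any two almost-complex structures with vanishing first Chern class on the simply connected $6$-manifold $M$ are homotopic, $\So(6)/\U(3)$ having vanishing homotopy groups in degrees $3,\dots,6$. The resulting $\hat J$ satisfies $Tf\circ J=\hat J\circ Tf$, finishing the proof.

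\emph{Main obstacle.} I expect the real difficulty to be the construction of the smooth framed embedding $M\hookrightarrow\R^{8}$: codimension two lies below the range of the Haefliger--Hirsch embedding theorems, so one must either carry out the surgery-theoretic argument with care over the framings and the ambient realisation of the surgeries of middle index, or appeal to the structure theory of simply connected $6$-manifolds --- which is presumably where simple connectivity and torsion-freeness of $H_{*}(M)$ are genuinely needed. The remaining ingredients, namely the obstruction-theoretic identification of $TM$ and the passage from an arbitrary $J$ with $c_{1}(M,J)=0$ to an adapted $\hat J$, are routine once Theorem~\ref{Theorem ParaPS} is available.
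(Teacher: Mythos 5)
Your necessity argument matches the paper's, and your overall strategy for sufficiency (trivialize, embed, extend the structure via nullhomotopy) is the right skeleton, but the decisive step is a non sequitur. You write that since $(TM,J)$ and $\nu$ are trivial complex bundles, $f^{*}T\R^{8}$ with the structure $J\oplus J'$ is a trivial complex bundle, ``so its classifying map $M\to \So(8)/\U(4)$ is null-homotopic.'' Triviality of the complex bundle only says that the composite $M\to\widetilde{\Gamma}(4)\overset{\imath}{\to}\Bu(4)$ is nullhomotopic; what Lemma \ref{Lemma Nullhomotopy} requires is nullhomotopy of the map into $\widetilde{\Gamma}(4)$ itself, which records the complex structure relative to the \emph{standard} framing of $T\R^{8}$ and is strictly stronger (the discrepancy is a map $M\to\So(8)$ comparing your complex framing with the ambient one). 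Bridging exactly this gap is the substance of the paper's proof: it runs obstruction theory for $g\colon M\to\widetilde{\Gamma}(4)$ using the fibration $\Gamma(4)\hookrightarrow\Bu(4)\to\Bs(8)$; the only obstructions lie in $H^{2}(M;\Z)$ and $H^{6}(M;\Z)$, the primary one satisfies $2\Omega_{2}(g)=c_{1}(M,J)$ (the factor $2$, coming from $\pi_{2}(\Gamma(4))\to\pi_{2}(\Bu(4))$ being multiplication by $2$, is precisely why complex-bundle data alone does not suffice), so it dies by $c_{1}=0$ and torsion-freeness, and the top obstruction is controlled by $c_{3}$, i.e.\ by $\chi(M)=0$. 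Your side remark that two almost-complex structures with equal $c_{1}$ on such an $M$ are homotopic could be developed into an alternative repair (homotope the given $J$ to the structure furnished by Theorem \ref{Theorem ParaPS} and transport the nullhomotopy through the induced homotopy of maps to $\widetilde{\Gamma}(4)$), but as written it is only a consistency check and your main chain of reasoning passes through the unjustified implication. A smaller point: ``all Chern classes vanish, hence the rank-$3$ bundle is complex-trivial over the $6$-complex, the even-degree obstructions being the Chern classes'' is too quick — higher obstructions are not literally Chern classes — though the conclusion is correct here via the stable-range argument (rank $3$ bundles over a $6$-complex are determined by their class in $\widetilde{K}^{0}$, which is torsion-free for this $M$); note the paper never needs complex triviality of $(TM,J)$.

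The second gap is the embedding $M\hookrightarrow\R^{8}$. The paper does not build it by surgery: it invokes Wall's embedding theorem (Theorem \ref{Theorem Wall}, from \cite{[W]}), which states that a closed smooth simply connected $6$-manifold with torsion-free homology and $w_{2}=0$ embeds in $\R^{8}$ if and only if $p_{1}(M)=0$ — this is exactly where the hypotheses of simple connectivity, torsion-freeness and $w_{2}=0$ enter. Your sketch (bound a spin $7$-manifold, make it highly connected, realize the handles ambiently from $S^{6}\subset\R^{8}$) is not a proof; the ambient realization of the middle-index handles in codimension two is the hard content of Wall's result, as you yourself anticipate when you flag this as the main obstacle. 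So the proposal identifies the right inputs but leaves both the embedding step and, more seriously, the passage from complex-bundle triviality to nullhomotopy in $\widetilde{\Gamma}(4)$ unproved.
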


A closed smooth orientable 6-manifold $M$ with torsion-free homology admits an almost-complex structure if and only if the image of $w_2(M)$ under the Bockstein homomorphism $\beta: H^2(M; \Z/2)\rightarrow H^3(M; \Z)$ vanishes, i.e., $\beta w_2(M) = 0$. The latter is equivalent to $w_2(M) = c_1(L)$ mod 2 for the first Chern class of a complex line bundle $L$ over $M$ \cite{[W]}.  

Kotschick has shown that any finitely presented group is the fundamental group of an almost-complex manifold of dimension greater or equal to four \cite{[Kotschick]}. Examples of pseudo-holomorphic embeddings of high-dimensional manifolds with prescribed fundamental group are given in our next result. 

\begin{thm}\label{Theorem G} Let $G$ be a finitely presented group and let $n\geq 3$. There exists a closed smooth almost-complex 2n-manifold $(M(G, 2n), J)$ with fundamental group $\pi_1(M(G, 2n)) \cong G$ and for which there exists a pseudo-holomorphic embedding\begin{equation}(M(G, 2n), J)\hookrightarrow (\R^{2n + 2}, \hat{J})\end{equation}for some almost-complex structure $\hat{J}$ on $\R^{2n + 2}$.
\end{thm}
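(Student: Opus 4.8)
The plan is to combine three ingredients: (i) Kotschick's construction realizing any finitely presented group $G$ as the fundamental group of a closed almost-complex $2n$-manifold, (ii) a surgery/connected-sum argument making the resulting manifold parallelizable without changing $\pi_1$, and (iii) the second half of Theorem \ref{Theorem ParaPS}, which upgrades a parallelizable $2n$-manifold that smoothly embeds into $\R^{2n+2}$ to a pseudo-holomorphic embedding into $(\R^{2n+2}, \hat J)$ for suitable $\hat J$. So the real content is: \emph{produce a closed parallelizable almost-complex $2n$-manifold with $\pi_1 \cong G$ that smoothly embeds in codimension two.}

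First I would start from a presentation $G = \langle x_1,\dots,x_k \mid r_1,\dots,r_\ell\rangle$ and build a model the standard way: take $\#_k (S^1\times S^{2n-1})$, which has free fundamental group $F_k$ and is parallelizable, then do surgery along embedded circles representing the relators $r_j$ (with framings chosen so the surgery preserves parallelizability; since $n\geq 3$ the circles and their normal data have plenty of room, and a framed circle in a parallelizable manifold can be surgered to keep the manifold parallelizable, as the obstruction lives in $\pi_{2n-1}(SO)$-type data that one can kill by the right framing choice for $n\geq 3$). Call the result $M_0$; it is a closed parallelizable $2n$-manifold with $\pi_1(M_0)\cong G$. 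Being parallelizable it is in particular almost-complex. The next step is to guarantee a smooth embedding into $\R^{2n+2}$: a parallelizable — indeed stably parallelizable — closed $2n$-manifold immerses in $\R^{2n+1}$ by Hirsch–Whitney theory, and the Haefliger–Hirsch theorem gives an embedding into $\R^{2n+1}$ when $2n\geq 5$ provided an obstruction in $H^{2n}(M_0;\Z/2)$ — essentially the normal Euler data / the mod-2 reduction related to the Kervaire semicharacteristic — vanishes; in any case one gets an embedding into $\R^{2n+2}$ for $2n\geq 4$ from Haefliger's metastable-range embedding theorem since $2(2n+2) \geq 3(2n) + 1$ fails only for small $n$, so I would instead simply note that a stably parallelizable $2n$-manifold embeds in $\R^{2n+2}$ by the Hirsch immersion theorem plus a general-position/Whitney-trick argument in the metastable range $n\geq 3$, or cite the relevant embedding result directly.

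With $M_0$ in hand — closed, parallelizable, $\pi_1 \cong G$, smoothly embedded in $\R^{2n+2}$ — I would invoke the second paragraph of Theorem \ref{Theorem ParaPS} verbatim: there exist almost-complex structures $J$ on $M_0$ and $\hat J$ on $\R^{2n+2}$ making the inclusion pseudo-holomorphic. Setting $M(G,2n) := M_0$ with this $J$ finishes the proof, since $\pi_1(M(G,2n)) \cong G$ by construction and $n\geq 3$ throughout.

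The main obstacle is step (ii): arranging that the surgeries realizing the relators can be done \emph{compatibly with parallelizability} and that the final manifold still embeds in codimension two. Surgery on a circle in a $2n$-manifold requires choosing a trivialization of the normal bundle, i.e. an element of $\pi_1(SO(2n-1))\cong \Z/2$ for $n\geq 2$; the effect on the tangent bundle / its stable class must be controlled, and one must check that the two framings do not both obstruct stable parallelizability — for $n\geq 3$ this is where the dimension hypothesis is genuinely used, and the cleanest route may be to appeal to an existing theorem (e.g. in the spirit of Kervaire–Milnor plumbing or the standard fact that every finitely presented group is $\pi_1$ of a closed stably parallelizable manifold of each dimension $\geq 4$) rather than to redo the framing bookkeeping. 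One then still has to feed the output into a codimension-two embedding theorem, for which the metastable range $n\geq 3$ is exactly what is needed; I expect this is the step the author's proof spends the most care on.
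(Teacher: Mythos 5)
Your top-level strategy coincides with the paper's: produce a closed parallelizable $2n$-manifold with fundamental group $G$ that embeds smoothly in $\R^{2n+2}$, then apply the second half of Theorem \ref{Theorem ParaPS}. The gaps are in the construction step. First, parallelizability: after surgering circles representing the $\ell$ relators in $\#_k(S^1\times S^{2n-1})$, the resulting manifold has Euler characteristic $2(\ell-k+1)$ (each circle surgery in even dimension raises $\chi$ by $2$, each connected summand lowers it by $2$), which is nonzero in general; since a closed parallelizable manifold must have $\chi=0$, no choice of framings can make your $M_0$ parallelizable, only \emph{stably} parallelizable. The framing data in $\pi_1(\So(2n-1))$ that you discuss controls the stable problem; the unstable obstruction is exactly $\chi$, by Proposition \ref{Proposition Kervaire}(ii), and you never correct it. The paper does: its $X(G,n)$ has $\chi=2$ and is then connect-summed with $S^3\times S^{n-3}$, which restores $\chi=0$ without changing $\pi_1$ or stable parallelizability, and only then is Kervaire's criterion invoked. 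Your argument is repairable in the same way (connected sums with products of odd-dimensional spheres adjust $\chi$ in steps of two), but as written the assertion that $M_0$ is parallelizable is false.

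Second, the codimension-two embedding is not justified. The metastable inequality you quote cuts the wrong way: $2(2n+2)\geq 3(2n)+1$ forces $n\leq 3/2$, so Haefliger's theorem fails for \emph{every} relevant $n$, not ``only for small $n$''; and there is no general theorem that a (stably) parallelizable closed $2n$-manifold embeds in $\R^{2n+2}$ --- codimension two is far outside the reach of Hirsch/Haefliger--Hirsch/Whitney-trick arguments, which is precisely why the embedding appears as a hypothesis in Theorem \ref{Theorem ParaPS} and Corollary \ref{Theorem CRParallelizable} rather than as a consequence of parallelizability. The paper sidesteps this by building $M(G,2n)$ explicitly out of connected sums of products of spheres and circle surgeries (Proposition \ref{Proposition G}), so the embedding into $\R^{2n+2}$ exists by construction, using that $S^{m_1}\times S^{m_2}$ embeds in $\R^{m_1+m_2+1}$. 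To close your argument you would either need an analogous explicit construction or a genuine codimension-two embedding theorem; as it stands this step is a gap, not a routine citation.
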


Theorem \ref{Theorem G} could be compared to the situation in dimension four, where the possible choices of fundamental group of a parallelizable 4-manifold are heavily restricted.

Let us now turn our attention to CR regular embeddings into complex space. For this, we now consider a closed smooth real 2n-manifold $M$ of real dimension $\Dim_{\R}(M) = 2n$, a complex manifold $(X, J)$ of complex dimension $\Dim_{\C}(X) = n + 1$, a smooth embedding\begin{equation}\label{GenericEmbedding}f: M\hookrightarrow X,\end{equation} and the bundle\begin{equation}f_{\ast}T_pM\cap J f_{\ast}T_pM\subset T_{f(p)}X.\end{equation}A point $p\in M$ is said to be CR regular provided that \begin{equation}\label{CmplxDim}\Dim_{\C}(f_{\ast}T_pM\cap J f_{\ast}T_pM) = n - 1.\end{equation}The points of $M$ whose complex tangent space has complex dimension equal to n are called complex or CR singular (see Section \ref{Section CmplxPts}, \cite{[S1], [S2]} and the references there for further details). 

\begin{definition}\label{Definition CR embeddings} An embedding $f: M\hookrightarrow X$ for which every point $p\in M$ is CR regular is said to be a CR regular embedding. 
\end{definition}

In \cite{[S1], [S2], [S3]}, Slapar determined topological obstructions to be able to deform a generic smooth embedding as in (\ref{GenericEmbedding}) into a CR regular embedding (see Section \ref{Section CmplxPts}). He showed that a 4-manifold admits a CR regular embedding into $\C^3$ if and only if the 4-manifold is parallelizable \cite{[S2]}. Necessary and sufficient conditions for the existence of CR regular embeddings of 6-manifolds into $\C^4$ were studied by the author of this note in \cite{[T]}. Building on work of Kervaire \cite{[Kervaire2]}, we extend these results to arbitrary even dimensions.

\begin{cor}\label{Theorem CRParallelizable} Let $n\in \N$ and suppose $M$ is a closed smooth real manifold of real dimension $\Dim_{\R}(M) = 2n$ that can be smoothly embedded into $\R^{2n + 2}$. There is a CR regular embedding\begin{equation}f:M\hookrightarrow \C^{n + 1}\end{equation}if and only if $M$ is parallelizable. 
\end{cor}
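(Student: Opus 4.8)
The plan is to derive the corollary by combining Theorem~\ref{Theorem ParaPS} with the equivalence, established in this paper, between pseudo-holomorphic embeddings of almost-complex $2n$-manifolds into almost-complex $(2n+2)$-Euclidean space and CR regular embeddings of $2n$-manifolds into $\C^{n+1}$. The proof introduces no new construction; it chains the two equivalences and checks that the hypotheses, in particular the freedom to use a non-integrable almost-complex structure on $\R^{2n+2}$, line up at every step.

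First I would handle the ``only if'' direction. Given a CR regular embedding $f: M\hookrightarrow \C^{n+1}$, the equivalence theorem produces an almost-complex structure $J_M$ on $M$, an almost-complex structure $\hat{J}$ on $\R^{2n+2}$ (not assumed integrable), and a pseudo-holomorphic embedding $(M, J_M)\hookrightarrow (\R^{2n+2}, \hat{J})$. The first assertion of Theorem~\ref{Theorem ParaPS} then forces $M$ to be parallelizable. Note that this implication does not even use the standing assumption that $M$ embed smoothly into $\R^{2n+2}$, since a CR regular embedding into $\C^{n+1}\cong\R^{2n+2}$ is already such an embedding.

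For the ``if'' direction, suppose $M$ is parallelizable and admits a smooth embedding into $\R^{2n+2}$. The second assertion of Theorem~\ref{Theorem ParaPS} supplies almost-complex structures $J_M$ on $M$ and $\hat{J}$ on $\R^{2n+2}$ together with a pseudo-holomorphic embedding $(M, J_M)\hookrightarrow (\R^{2n+2}, \hat{J})$, and the equivalence theorem, applied in the reverse direction, turns this into a CR regular embedding $M\hookrightarrow \C^{n+1}$. This completes the argument.

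I expect the only point requiring care to be bookkeeping rather than geometry: the smooth embeddability hypothesis must be invoked precisely in the ``if'' direction, where it is indispensable, since a parallelizable $2n$-manifold with no smooth embedding into $\R^{2n+2}$ plainly admits no CR regular embedding into $\C^{n+1}$. All the substantive work lies upstream --- in the constructive half of Theorem~\ref{Theorem ParaPS}, which rests on Kervaire's results, and in the two constructions underlying the equivalence theorem that pass between a CR regular embedding into $\C^{n+1}$ and a pseudo-holomorphic embedding into almost-complex $\R^{2n+2}$ --- and here it is only cited.
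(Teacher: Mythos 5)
Your proposal is circular with respect to the paper's logical structure. The equivalence you invoke (Corollary \ref{Theorem Equivalence}) is not an independent result with its own ``two constructions'' passing between the two kinds of embeddings; in the paper it is obtained precisely by coupling Corollary \ref{Theorem CRParallelizable} with Theorem \ref{Theorem ParaPS}, equivalently from Proposition \ref{Proposition CRParallelizable} and Corollary \ref{Corollary CRG}, whose conjunction \emph{is} the statement you are asked to prove. There is in fact no direct geometric passage between a CR regular embedding and a pseudo-holomorphic one: the two conditions are pointwise incompatible (a pseudo-holomorphic embedding makes every tangent space a complex subspace of complex dimension $n$, whereas CR regularity requires the complex tangent space to have dimension $n-1$ at every point), so any equivalence of existence must be routed through an abstract criterion, namely parallelizability. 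Deferring to the equivalence therefore defers exactly the content your proof is supposed to supply, and nothing in your write-up replaces it.

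The missing argument lives entirely on the CR side and does not use Theorem \ref{Theorem ParaPS} at all. For the ``only if'' direction: a CR regular embedding $f:M\hookrightarrow \C^{n+1}$ forces the Lai indices $I_{\pm}(M)$ to vanish, and formula (\ref{Equation LI}) then gives $\chi(M)=0$; since $f$ is a codimension-two embedding, the normal bundle is trivial and $M$ is stably parallelizable, so Kervaire's criterion (item (ii) of Proposition \ref{Proposition Kervaire}) yields that $TM$ is trivial --- this is Proposition \ref{Proposition CRParallelizable}. For the ``if'' direction: if $TM$ is trivial and $M$ embeds smoothly in $\R^{2n+2}\cong\C^{n+1}$ with trivial normal bundle, the Lai indices vanish by (\ref{Equation LI}), and Slapar's cancellation theorem (Proposition \ref{Proposition Slapar}) isotopes the embedding to a CR regular one --- this is Corollary \ref{Corollary CRG}. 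Theorem \ref{Theorem ParaPS} only enters afterwards, when it is combined with this corollary to deduce the equivalence you tried to cite as input.
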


Coupling Corollary \ref{Theorem CRParallelizable} with Theorem \ref{Theorem ParaPS} yields the following equivalence between the two kinds of embeddings that are under study in this paper.

\begin{cor}\label{Theorem Equivalence} A closed smooth real manifold $M$ of real dimension $\Dim_{\R}(M) = 2n$ admits a CR regular embedding\begin{equation}M\hookrightarrow \C^{n + 1}\end{equation} if and only if the are almost-complex structures $(M, J_M)$ and $(\R^{2n + 2}, \hat{J})$ for which there is a pseudo-holomorphic embedding\begin{equation}(M, J)\hookrightarrow (\R^{2n + 2}, \hat{J}).\end{equation}
\end{cor}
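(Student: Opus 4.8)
The plan is to derive the equivalence by feeding the parallelizability criterion of Corollary \ref{Theorem CRParallelizable} into the two halves of Theorem \ref{Theorem ParaPS}, using the trivial observation that both a CR regular embedding $M\hookrightarrow \C^{n+1}$ and a pseudo-holomorphic embedding $(M,J_M)\hookrightarrow(\R^{2n+2},\hat J)$ are, after forgetting the extra structure and identifying $\C^{n+1}\cong\R^{2n+2}$ smoothly, ordinary smooth embeddings of $M$ into $\R^{2n+2}$.

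For the forward direction I would assume $M$ admits a CR regular embedding $f:M\hookrightarrow\C^{n+1}$. Viewing $f$ as a smooth embedding $M\hookrightarrow\R^{2n+2}$ shows that the standing hypothesis of Corollary \ref{Theorem CRParallelizable} is satisfied, so its ``only if'' direction forces $M$ to be parallelizable. Then $M$ is a parallelizable $2n$-manifold that embeds smoothly into $\R^{2n+2}$, which is exactly the hypothesis of the second paragraph of Theorem \ref{Theorem ParaPS}; that statement produces almost-complex structures $J_M$ on $M$ and $\hat J$ on $\R^{2n+2}$ together with a pseudo-holomorphic embedding $(M,J_M)\hookrightarrow(\R^{2n+2},\hat J)$.

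Conversely, given almost-complex structures $(M,J_M)$ and $(\R^{2n+2},\hat J)$ and a pseudo-holomorphic embedding between them, the first paragraph of Theorem \ref{Theorem ParaPS} gives that $M$ is parallelizable, while forgetting the almost-complex structures exhibits a smooth embedding $M\hookrightarrow\R^{2n+2}$. Hence the hypotheses of Corollary \ref{Theorem CRParallelizable} are met, and its ``if'' direction yields a CR regular embedding $M\hookrightarrow\C^{n+1}$, completing the equivalence.

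The only point requiring care — and the closest thing to an obstacle here — is the bookkeeping around the auxiliary embedding hypothesis: Corollary \ref{Theorem CRParallelizable} is conditional on $M$ embedding smoothly into $\R^{2n+2}$, so in each direction one must first extract that underlying smooth embedding before invoking the parallelizability criterion. Beyond this, no new geometric input is needed; the corollary is a formal consequence of Theorem \ref{Theorem ParaPS} and Corollary \ref{Theorem CRParallelizable}.
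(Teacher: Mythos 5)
Your proposal is correct and follows essentially the same route as the paper, which deduces the corollary from Theorem \ref{Theorem PHP} (= Theorem \ref{Theorem ParaPS}) together with Proposition \ref{Proposition CRParallelizable} and Corollary \ref{Corollary CRG}, the latter two being exactly the content of Corollary \ref{Theorem CRParallelizable} that you invoke. Your remark about extracting the underlying smooth embedding into $\R^{2n+2}$ from either hypothesis is the same bookkeeping implicit in the paper's argument.
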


Finally, we present the following myriad of examples of CR regular embeddings into high-dimensional complex spaces.

\begin{cor}\label{Theorem 1} For any finitely presented group $G$ and integer number $n\geq 3$, there exists a closed smooth real orientable manifold $M(G, 2n)$ of real dimension $\Dim_\R (M(G, 2n)) = 2n$ whose fundamental group is isomorphic to $G$ and for which there is a CR regular embedding\begin{equation}M(G, 2n)\hookrightarrow \C^{n + 1}.\end{equation}
\end{cor}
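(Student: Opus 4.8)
The plan is to obtain this statement by feeding the manifolds produced in Theorem \ref{Theorem G} into the equivalence of Corollary \ref{Theorem Equivalence}, so essentially no new construction is needed. Concretely, fix a finitely presented group $G$ and an integer $n \geq 3$. Theorem \ref{Theorem G} furnishes a closed smooth almost-complex $2n$-manifold $(M(G,2n), J)$ with $\pi_1(M(G,2n)) \cong G$ together with a pseudo-holomorphic embedding $(M(G,2n), J) \hookrightarrow (\R^{2n+2}, \hat{J})$ for a suitable almost-complex structure $\hat{J}$ on $\R^{2n+2}$. Since its underlying smooth manifold carries an almost-complex structure, $M(G,2n)$ is canonically oriented, which takes care of the orientability assertion for free.

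It then remains only to convert this pseudo-holomorphic embedding into a CR regular one, and this is exactly the content of Corollary \ref{Theorem Equivalence}: a closed smooth real $2n$-manifold $M$ admits a CR regular embedding $M \hookrightarrow \C^{n+1}$ if and only if there are almost-complex structures $(M, J_M)$ and $(\R^{2n+2}, \hat{J})$ for which a pseudo-holomorphic embedding $(M, J_M) \hookrightarrow (\R^{2n+2}, \hat{J})$ exists. Applying the ``if'' direction of Corollary \ref{Theorem Equivalence} with $M = M(G,2n)$ and the pseudo-holomorphic embedding from the previous paragraph produces the desired CR regular embedding $M(G,2n) \hookrightarrow \C^{n+1}$.

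I would also note the alternative route that bypasses Corollary \ref{Theorem Equivalence}: by the first half of Theorem \ref{Theorem ParaPS} the manifold $M(G,2n)$ is parallelizable, and by its construction in Theorem \ref{Theorem G} it smoothly embeds into $\R^{2n+2}$; Corollary \ref{Theorem CRParallelizable} then directly gives the CR regular embedding into $\C^{n+1}$. Either way, there is no genuine obstacle here — the only things to check are that the dimensions and the target almost-complex Euclidean space $\R^{2n+2}$ match the hypotheses of the cited results, and that orientability is automatic; all the substantive work has already been done in Theorem \ref{Theorem G} and in the proofs of the equivalence results. If anything, the mildly delicate point worth a sentence is simply recording that the manifold whose existence is claimed can be taken to be literally the one constructed for Theorem \ref{Theorem G}, so that the fundamental group and the embedding are produced simultaneously.
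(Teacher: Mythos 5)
Your proposal is correct and is essentially the paper's argument: the author invokes Proposition \ref{Proposition G} to get a parallelizable $M(G,2n)$ with $\pi_1 \cong G$ embedded in $\R^{2n+2}$ and then cites Corollary \ref{Theorem CRParallelizable} under the identification $\R^{2n+2}\cong\C^{n+1}$, which is exactly the ``alternative route'' you sketch. Your primary route through Theorem \ref{Theorem G} and Corollary \ref{Theorem Equivalence} is only a repackaging of the same ingredients (Corollary \ref{Theorem Equivalence} is itself deduced from Theorem \ref{Theorem PHP}, Corollary \ref{Corollary CRG}, and Proposition \ref{Proposition CRParallelizable}), so there is no substantive difference and no circularity.
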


We end the introduction with a blueprint of the paper. Section \ref{Section TrivialTM} collects the results of Kervaire that we use through out the paper to determine when a manifold is parallelizable, as well as the construction of the manifold used in the proofs of Theorem \ref{Theorem G} and Corollary \ref{Theorem 1}. Almost-complex structures on Euclidean spaces and pseudo-holomorphic embeddings are discussed in Section \ref{Section AlmCmplxStr}, which includes the proof of Theorem \ref{Theorem ParaPS}. The results on CR regular embeddings that were mentioned in the introduction are described at further length in Section \ref{Section CmplxPts}. This section also contains the main ingredients for our proof of Corollary \ref{Theorem CRParallelizable}, which is given in Section \ref{Section ProofCRParallelizable}. The proof of Corollary \ref{Theorem Equivalence} follows from the results that are proven in Sections \ref{Section AlmCmplxStr} and \ref{Section CmplxPts} as it is mentioned in Section \ref{Section ProofTheoremEquivalence}. The proof of Theorem \ref{Theorem 6D} is given in Section \ref{Section Proof6D}. Proofs of Theorem \ref{Theorem G} and Corollary \ref{Theorem 1} are given in Section \ref{Section ProofG}.

\subsection{Acknowledgements:} We thank Daniele Zuddas for useful e-mail exchanges. We thank the referee for her/his detailed report on the initial version of this note, where he/she pointed out several inaccuracies, mistakes, and necessary revisions. Her/his input and insightful remarks were of great value for the improvement of the note.

\section{Parallelizable closed $n$-manifolds with arbitrary fundamental group and that embedded in $\R^{n + 2}$ for $n\geq 5$}\label{Section TrivialTM} Recall that a smooth $n$-manifold $M$ of real dimension $n$ is called parallelizable if its tangent bundle $TM$ is trivial. If the Whitney sum with a trivial bundle $TM\oplus \epsilon$ is trivial, then $M$ is said to be stably parallelizable \cite[Section 3]{[KervaireMilnor]}. We begin this section by describing the topological obstruction to trivializing the tangent bundle of a manifold building on work of Kervaire \cite{[Kervaire1], [Kervaire2]} (cf. \cite[\textsection1]{[JW]}). Associated to the stabilization map $\pi_{n - 1}(\So(n))\overset{\sigma}{\longrightarrow} \pi_{n - 1}(\So)$ there are groups\begin{equation}\mathcal{K}_n:=\Ke(\pi_{n - 1}(\So(n))\overset{\sigma}{\longrightarrow} \pi_{n - 1}(\So)),\end{equation}and the obstruction for a stably parallelizable manifold $M$ to be parallelizable is an element\begin{equation}\psi(M)\in H^n(M; \mathcal{K}_n)\cong \mathcal{K}_n.\end{equation}

The groups $\mathcal{K}_n$ were computed by Kervaire in \cite{[Kervaire1]}. Stably parallelizable manifolds of dimension one, three or seven are parallelizable and $\mathcal{K}_1 = \mathcal{K}_3 = \mathcal{K}_7 = 0$. For all other odd values of $n$, $\mathcal{K}_n = \Z/2$ and Kervaire showed that $\psi(M)$ vanishes if and only if the Kervaire semi-characteristic $\hat{\chi}_{\Z/2}(M)$ is zero. Recall that the Kervaire semi-characteristic of a closed smooth $n$-manifold $M$ of dimension $n = 2k + 1$ is defined as\begin{equation}\label{SemiChar}\hat{\chi}_{\Z/2}(M):= \sum^k_{i = 0}\Dim H^i(M; \Z/2) \Mod 2\end{equation} for $k\in \N$. When the dimension $n$ is even, $\mathcal{K}_n = \Z$ and the element $\psi(M)$ can be identified with the Euler characteristic $\chi(M)$. The work of Kervaire yields the following criteria to decide when a manifold is parallelizable.

\begin{proposition}\label{Proposition Kervaire} Kervaire \cite{[Kervaire1], [Kervaire2]}.

$(i)$ Let $M$ be a closed smooth stably parallelizable $n$-manifold and suppose that the dimension $n$ is odd and $n\neq 1, 3, 7$. The tangent bundle $TM$ is trivial if and only if $\hat{\chi}_{\Z/2}(M) = 0$.

$(ii)$ The tangent bundle $TM$ of a closed smooth stably parallelizable $2n$-manifold $M$ is trivial if and only if $\chi(M) = 0$. In particular, a closed smooth orientable 2n-manifold that embeds into $\R^{2n + 2}$ is parallelizable if and only if its Euler characteristic is zero.

\end{proposition}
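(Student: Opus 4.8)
Parts $(i)$ and $(ii)$, apart from the final ``in particular'' clause, follow by assembling the facts about Kervaire's invariant recalled just above. Since $M$ is stably parallelizable by hypothesis, the sole obstruction to trivializing $TM$ is the class $\psi(M)\in H^n(M;\mathcal{K}_n)\cong\mathcal{K}_n$, so that $TM$ is trivial if and only if $\psi(M)=0$. When $n$ is odd and $n\neq 1,3,7$ one has $\mathcal{K}_n=\Z/2$ and, by Kervaire, $\psi(M)=0$ exactly when the Kervaire semi-characteristic $\hat{\chi}_{\Z/2}(M)$ vanishes; this is $(i)$. When $n$ is even one has $\mathcal{K}_n=\Z$ and $\psi(M)$ is identified with the Euler characteristic $\chi(M)$, which yields the first assertion of $(ii)$ (the implication ``$TM$ trivial $\Rightarrow\chi(M)=0$'' can of course also be seen directly, a trivial tangent bundle having a nowhere-zero section). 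The step I expect to be the genuine obstacle is none of this bookkeeping but rather Kervaire's computation of the groups $\mathcal{K}_n$ and, in the odd case, his identification of $\psi(M)$ with $\hat{\chi}_{\Z/2}(M)$; I would simply quote \cite{[Kervaire1], [Kervaire2]} for these and treat them as input lying outside this note.

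It then remains to establish the final sentence of $(ii)$, and for this it suffices to prove that a closed orientable $2n$-manifold $M$ embedding into $\R^{2n+2}$ is stably parallelizable, after which part $(ii)$ applies verbatim. Let $\nu$ be the normal bundle of such an embedding; as $M$ and $\R^{2n+2}$ are orientable, $\nu$ is an oriented rank-$2$ bundle and $TM\oplus\nu\cong\epsilon^{2n+2}$. Oriented rank-$2$ bundles over a space are classified by their Euler class in $H^2(-;\Z)$ (because $BSO(2)$ is a $K(\Z,2)$), so it is enough to show $e(\nu)=0$: then $\nu\cong\epsilon^2$ and hence $TM\oplus\epsilon^2\cong\epsilon^{2n+2}$ is stably trivial.

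To prove $e(\nu)=0$, pick an open tubular neighbourhood $N$ of $M$ in $\R^{2n+2}$ and apply the Mayer--Vietoris sequence to the cover $\{N,\ \R^{2n+2}\setminus M\}$, whose intersection $N\setminus M$ deformation retracts onto the sphere bundle $S(\nu)\overset{\pi}{\to}M$. Since $H^1(\R^{2n+2};\Z)=H^2(\R^{2n+2};\Z)=0$, this sequence gives an isomorphism $H^2(N;\Z)\oplus H^2(\R^{2n+2}\setminus M;\Z)\xrightarrow{\ \sim\ }H^2(S(\nu);\Z)$, whence in particular the pullback $\pi^{\ast}\colon H^2(M;\Z)\to H^2(S(\nu);\Z)$ is injective. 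On the other hand, the Gysin sequence of the circle bundle $S(\nu)\to M$ identifies $\ker\pi^{\ast}$ with the image of $\,\cup\, e(\nu)\colon H^0(M;\Z)\to H^2(M;\Z)$, i.e.\ with the cyclic subgroup generated by $e(\nu)$. Combining the two, $e(\nu)=0$, so $\nu$ is trivial, $M$ is stably parallelizable, and part $(ii)$ shows that $M$ is parallelizable if and only if $\chi(M)=0$. The only subtlety I anticipate here is the routine one of arranging the Mayer--Vietoris cover to be genuinely open, which is handled by a standard thickening.
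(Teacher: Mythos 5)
Your proposal is correct, but for the one part of the statement that the paper actually proves it takes a genuinely different route. Like the paper, you treat item $(i)$ and the first claim of $(ii)$ as quoted input from Kervaire (the computation of $\mathcal{K}_n$ and the identification of $\psi(M)$ with $\hat{\chi}_{\Z/2}(M)$, resp.\ $\chi(M)$); the paper does exactly the same. For the final ``in particular'' clause, the paper quotes triviality of the normal bundle of a codimension-two embedding from \cite{[Kirby]}, forms the generalized Gauss map $G\colon M\rightarrow V_2(\R^{2n+2})$ from a normal framing, invokes Kervaire's curvatura integra formula \cite[Section 9]{[Kervaire2]} to see that $\chi(M)=0$ forces $G$ to be nullhomotopic, and then trivializes $TM$ directly as the complement of a (homotopically constant) normal $2$-frame inside $M\times\R^{2n+2}$ --- an argument the paper reuses when building the almost-complex structures in Theorem~\ref{Theorem ParaPS}. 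You instead prove triviality of the normal bundle by hand: $\nu$ is an oriented rank-$2$ bundle, your Mayer--Vietoris comparison shows $\pi^{\ast}\colon H^2(M;\Z)\rightarrow H^2(S(\nu);\Z)$ is injective, the Gysin sequence identifies $\ker\pi^{\ast}$ with the subgroup generated by $e(\nu)$, hence $e(\nu)=0$ and $\nu$ is trivial since $B\So(2)\simeq K(\Z,2)$; this gives stable parallelizability, after which you feed the statement back into the already-quoted first claim of $(ii)$. Both arguments are sound. Yours buys a self-contained proof of normal-bundle triviality (no appeal to \cite{[Kirby]}) and avoids the curvatura integra formula entirely, at the cost of resting the whole clause on the obstruction-theoretic identification $\psi(M)=\chi(M)$; the paper's Gauss-map argument is closer to Kervaire's original and has the side benefit, exploited later in the paper, of producing an explicit homotopy of framings rather than a bare vanishing of an obstruction class.
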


The reader is referred to \cite{[Kervaire1], [Kervaire2]} for the proof of Item (i). Due to its role in our proofs of the main results and for the sake of making the paper as self-contained as possible, we give the proof of the last claim of Item (ii) of Proposition \ref{Proposition Kervaire} following \cite[Section 9]{[Kervaire2]} (cf. \cite[Section 6]{[DKZ]}).

\begin{proof} Let $f$ be an embedding of $M$ into $\R^{2n + 2}$. Since $f$ is of codimension two, the normal bundle $\nu_f(M)$ is trivial \cite[Chapter VIII. Theorem 2]{[Kirby]}, and a trivialization yields the generalized Gauss map $G: M\rightarrow V_2(\R^{2n + 2})$, where $V_2(\R^{2n + 2})$ is the Stiefel manifold of linear injective maps $\R^2\rightarrow \R^{2n + 2}$. The map $G$ is nullhomotopic if and only if the generalized curvatura integra $G_{\ast}([M])\in H_{2n}(V_2(\R^{2n + 2}))$ is zero. Kervaire \cite[Section 9]{[Kervaire2]} expressed $G_{\ast}([M])$ in terms of the Hopf invariant of $M$, its Euler characteristic $\chi(M)$, and its semi-characteristic. Given the existence of the codimension two embedding $f$, the only obstruction is then $\chi(M)$ and since we assumed this is zero, the map $G$ is homotopic to a constant map. Consider the canonical identification $\R^{2n + 2}\cong \C^{n + 1}$ along with standard complex coordinates $(z_1, \ldots, z_{n + 1})$ on the latter. We can now take the holomorphic vector field $\partial/\partial_{z_{n + 1}}$ as a constant map and the tangent bundle $TM$ is homotopic to the pullback $f^{\ast}(T\R^{2n + 2}) = f^{\ast}(\C^{n + 1}) = T\C^{n + 1}|_M = M\times \C^{n + 1} \cong M\times \R^{2n + 2}$ as subbundles of $T\C^{n + 1}\cong T\R^{2n + 2}$. This implies that $TM$ is trivial.
\end{proof}

\begin{remark} An anonymous referee kindly pointed out the following simplification of the previous proof. Instead of considering the identification $\R^{2n + 2} \cong \C^{n + 1}$ as in our argument, the conclusion follows by taking the trace of the nullhomotopy of the generalized Gauss map $G$.\end{remark}

A classical result of Dehn \cite{[Dehn]} states that for any group $G$ that has a finite presentation, there exists a closed smooth stably parallelizable $(n - 1)$-manifold whose fundamental group is isomorphic to $G$. In this section we observe that tweaks to his construction yield the following result. 

\begin{proposition}\label{Proposition G} Let $G$ be a finitely presented group and suppose $n\geq 5$. There exists a closed smooth n-manifold $M(G, n)$ such that

$(i)$ the fundamental group is $\pi_1(M(G, n)) \cong G$,

$(ii)$ the tangent bundle $TM(G, n)$ is trivial, and

$(iii)$ there is an embedding\begin{center}\label{EmbeddingProp}$M(G, n)\hookrightarrow \R^{n + 1}.$\end{center}
\end{proposition}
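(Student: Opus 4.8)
The plan is to follow Dehn's classical construction of a manifold with prescribed finitely presented fundamental group, but carried out by surgery inside a sphere so that the resulting manifold is stably parallelizable and, crucially, comes with an embedding of codimension one. Start with a finite presentation $G = \langle x_1, \ldots, x_k \mid r_1, \ldots, r_\ell\rangle$. Inside $S^{n}$ (or equivalently, thinking of it as the boundary of $D^{n+1}$), first perform surgery on $k$ disjointly embedded copies of $S^0 \times D^{n}$ — i.e. take a connected sum of $k$ copies of $S^1 \times S^{n-1}$ — to obtain a manifold $N_1$ with free fundamental group $F_k$ on the $x_i$. Since $n \geq 5$, each relator $r_j$ is represented by an embedded circle in $N_1$ (general position: $1 + 1 < n$ makes embeddings generic and $2 < n$ lets us untangle), and because $N_1$ is orientable and $n-1 \geq 4$, the normal bundle of such a circle is trivial, so we may do surgery on these $\ell$ circles. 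The result is the desired closed smooth $n$-manifold $M(G,n)$ with $\pi_1(M(G,n)) \cong G$ by van Kampen; this is exactly Dehn's argument and gives item $(i)$.

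For item $(ii)$, the point is that surgery on a framed embedded sphere of codimension $\geq 3$ inside a parallelizable manifold preserves stable parallelizability: the trace of such a surgery is a parallelizable cobordism, so both ends have stably trivial tangent bundle. Since $S^n$ is stably parallelizable and all the surgeries above are on circles ($S^1 \times S^{n-1}$-type and then surgery on $S^1$'s), which have codimension $n - 1 \geq 4$, the manifold $M(G,n)$ is stably parallelizable. To upgrade stable parallelizability to genuine parallelizability I invoke Proposition \ref{Proposition Kervaire}. If $n$ is even, stably parallelizable plus $\chi(M(G,n)) = 0$ gives triviality of $TM(G,n)$; and the Euler characteristic vanishes because $M(G,n)$ is obtained from $S^n$ by surgeries, each of which changes $\chi$ by an even amount (in even dimension $n$, a surgery on an $S^{j}$ changes Euler characteristic by $(-1)^{j}\big(1 + (-1)^{n}\big)$ type contributions — more concretely $\chi(S^n) = 2$ and odd-dimensional surgeries can be arranged, or one simply computes that the handle decomposition contributes cancelling pairs), so one must check $\chi = 2 + (\text{correction}) = 0$; if it is not automatically zero one does an extra surgery on an embedded $S^{n-1}$ (connected sum with $S^1 \times S^{n-1}$) which drops $\chi$ by $2$ without changing $\pi_1$ — this is the one bookkeeping point that needs care. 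If $n$ is odd (and $n \neq 1, 3, 7$, which is consistent with $n \geq 5$), one needs the Kervaire semi-characteristic $\hat\chi_{\Z/2}(M(G,n))$ to vanish; again this can be forced by an extra surgery on an embedded circle or $S^{n-1}$ adjusting the $\Z/2$-Betti numbers in degrees $\leq (n-1)/2$ without disturbing the fundamental group, since $n \geq 5$ leaves enough room.

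For item $(iii)$, the embedding into $\R^{n+1}$, the cleanest route is to keep track of the embedding throughout the construction. The sphere $S^n$ sits in $\R^{n+1}$ in the standard way. Each surgery we perform can be realized \emph{ambiently}: surgery on an embedded framed $S^{j} \subset S^n$ with $j \leq 1$ corresponds to attaching a handle, and one can push the relevant region of $S^n$ across an embedded $(j+1)$-handle in $\R^{n+1}$ (the handle sits in a collar $S^n \times [0,1] \subset \R^{n+1}$, available because $S^n$ is two-sided), so the trace of the surgery embeds in $\R^{n+1} \times [0,1]$ and the new level set embeds in $\R^{n+1}$. Concretely, $S^1 \times S^{n-1}$ is the boundary of a tubular neighbourhood of an unknotted $S^1 \subset \R^{n+1}$, hence embeds in $\R^{n+1}$, and a connected sum of such embeds as well; the relator surgeries, being on circles with trivial normal bundle lying in a chart, can be done inside a ball and thus inside $\R^{n+1}$. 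I expect the main obstacle to be precisely the simultaneous control of all three properties: one must choose the framings of the surgery spheres so that (a) the framed surgery is ambient in $\R^{n+1}$, (b) stable parallelizability is preserved, and (c) the Euler or Kervaire semi-characteristic ends up zero — and then verify that the corrective surgery used to kill the (semi-)characteristic can itself be done ambiently and keeps $\pi_1$ and stable parallelizability intact. Since $n \geq 5$ gives codimension $\geq 4$ for all the circles involved, general-position and normal-bundle-triviality arguments go through, and Proposition \ref{Proposition Kervaire} supplies exactly the characteristic-class criterion needed to close the argument.
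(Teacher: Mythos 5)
Your overall strategy --- Dehn's construction carried out ambiently inside $\R^{n+1}$ starting from $S^n$, stable parallelizability via framed surgeries on circles, and Kervaire's criterion (Proposition \ref{Proposition Kervaire}) to upgrade to honest parallelizability after a correction killing $\chi$ or $\hat{\chi}_{\Z/2}$ --- is essentially the paper's route (the paper builds the Dehn manifold one dimension down, corrects it by connected sums with products of spheres, embeds it in codimension one, and relabels). But your correction step in the even-dimensional case is wrong: connected sum with $S^1\times S^{n-1}$ (which, incidentally, is a $0$-surgery, not ``surgery on an embedded $S^{n-1}$'') does drop $\chi$ by $2$, but it replaces $\pi_1$ by $G\ast\Z$, destroying item $(i)$. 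The correction is genuinely needed, since your construction has $\chi = 2 - 2(k-\ell)$, which vanishes only for deficiency-one presentations --- unavailable for, e.g., finite groups --- so this is not mere bookkeeping. The right move, and the one the paper makes, is to connect-sum with \emph{simply connected} stably parallelizable sphere products: $S^3\times S^{n-3}$ lowers $\chi$ by $2$ and $S^2\times S^{n-2}$ raises it by $2$ (for even $n\geq 6$), both embed in $\R^{n+1}$ and leave $\pi_1$ and stable parallelizability untouched. Your odd-dimensional fix via surgery on a null-homotopic circle (i.e.\ summing with $S^2\times S^{n-2}$, which flips $\hat{\chi}_{\Z/2}$ since only $b_2$ lies in the relevant range when $n\geq 5$) is fine; the ``or $S^{n-1}$'' option there has the same $\pi_1$ problem and should be dropped.

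Second, your justification that the relator surgeries are ambient is incorrect as stated: a relator circle represents a nontrivial word in the free group $\pi_1(N_1)$, so it does not lie ``in a chart'' or in a ball, and the corresponding handle does not sit in a collar of the hypersurface. What actually makes the ambient surgery possible is that for the standard codimension-one embedding of $\#_k\, S^1\times S^{n-1}$ the outer complementary region is simply connected, so (ambient dimension $\geq 6$, general position) the relator circle bounds an embedded $2$-disk meeting the hypersurface only along its boundary, and one pushes the hypersurface across a thickening of that disk; one should also note that the framing realized by this ambient handle must be the one preserving stable parallelizability --- the two framings in $\pi_1(\So(n-1))=\Z/2$ give different results, and the wrong one can kill stable parallelizability, cf.\ \cite[Lemma 5.4]{[KervaireMilnor]} (the paper is admittedly terse on this point as well). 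With the $\chi$-correction replaced by simply connected summands and the ambient-surgery argument repaired along these lines, your proof goes through and coincides with the paper's.
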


A closed parallelizable $n$-manifold with prescribed fundamental group is immediately obtained by thickening a finite CW complex with fundamental group $G$ and zero Euler characteristic that is embedded in $\R^N$ for large $N$. Items $(i)$ and $(ii)$ of Proposition \ref{Proposition G} have been proven by Johnson-Walton \cite[Theorem A]{[JW]}. Our proof relies on a simpler argument than theirs and it recovers and strengthens their main result \cite[Theorem A]{[JW]}. 

\begin{proof}Consider the presentation\begin{equation}\label{Group}G = \langle g_1, \ldots, g_s | r_1, \ldots, r_t\rangle \end{equation}that consists of s generators $\{g_j\}$ and t relations $\{r_j\}$. The first step is to construct a closed smooth orientable $(n - 1)$-manifold $X(s)$ with free fundamental group $\langle g_1, \ldots, g_s\rangle = \Z\ast \cdots \ast \Z$ as the connected sum\begin{equation}\label{ConnectedSum}X(s):= S^1\times S^{n - 2}\#\cdots \# S^1\times S^{n - 2}\end{equation} of s copies of the product of the circle with the $(n - 2)$-sphere. Using a general position argument and the fact that an embedded loop in $X(s)$ has codimension at least three, represent the relations $\{r_1, \ldots, r_t\}$ in the presentation (\ref{Group}) by disjoint embedded loops $\gamma_j \subset X(s)$. The tubular neighborhood of each loop $\nu(\gamma_j)$ is diffeomorphic to $S^1\times D^{n - 2}$ for every $j$. Construct the closed smooth manifold\begin{equation}\label{n1manifold}X(G, n - 1):= \Big(X(s)\backslash \overset{t}{\underset{j = 1}{\bigsqcup}} \nu(\gamma_j)\Big)\bigcup \Big(\overset{t}{\underset{j = 1}{\bigsqcup}} (D_j^2\times S^{n - 3})\Big).\end{equation}

The Seifert-van Kampen theorem implies that the manifold $X(G, n - 1)$ has fundamental group isomorphic to (\ref{Group}). The bundle $TX(s)\oplus \epsilon$ is trivial since the connected sum of two stably parallelizable manifolds is stably parallelizable (cf. \cite[Lemma 3.5]{[KervaireMilnor]}). It follows that the manifold $X(G, n - 1)$ is stably parallelizable by \cite[Lemma 5.4]{[KervaireMilnor]}. The argument up to this point is due to Dehn \cite{[Dehn]}, which concludes the proof of Item $(i)$. As we have mentioned before, Johnson-Walton present an argument in \cite[Theorem A]{[JW]} to conclude that items $(ii)$ holds. Our argument is as follows. Use the $(n - 1)$-manifold of (\ref{n1manifold}) to build the $n$-manifold\begin{equation}\label{PreManifold}X(G, n) = ((X(G, n - 1)\times S^1)\backslash (D^{n - 1}\times S^1)) \cup (S^{n - 2}\times D^2),\end{equation}and construct the manifold\begin{equation}\label{GManifold}M(G, n):= X(G, n)\#S^3\times S^{n - 3}.\end{equation}The fundamental group of the manifolds in (\ref{PreManifold}) and (\ref{GManifold}) is isomorphic to $G$, and both manifolds are stably parallelizable. If $n$ is even, then the Euler characteristic of $X(G, n)$ is equal to two and $M(G)$ has zero Euler characteristic. Item (ii) of Proposition \ref{Proposition Kervaire} implies that $M(G, n)$ is parallelizable. Item (i) of Proposition \ref{Proposition Kervaire} allows us to conclude that the claim holds in odd-dimensions as well; notice that that $X(G, 7)$ is parallelizable. This finishes the proof of the claims in Item (i) and Item (ii). There are embeddings
\begin{equation}\label{ConsEmb}M(G, n)\hookrightarrow \R^{n + 2}\end{equation}and\begin{equation}\label{Op1}X(G, n - 1)\hookrightarrow \R^{n}\end{equation}by construction given that there exists an embedding $S^{m_1}\times S^{m_2}\hookrightarrow \R^{m_1 + m_2 + 1}$ for $m_1, m_2 \in \N$. We will abuse notation in these last lines and the hypothesis on the dimension of the manifolds is taken to be $n - 1\geq 5$. If the dimension $(n - 1)$ is odd, then either $X(G, n - 1)$ or\begin{equation}\label{Op2}X(G, n - 1)\# (S^3\times S^{n - 4})\# (S^3\times S^{n - 4})\end{equation} has trivial tangent bundle given that\begin{equation}\hat{\chi}_{\Z/2}(S^2\times S^{n - 3}\# S^2\times S^{n - 3}) = 1 \Mod 2\end{equation} and Proposition \ref{Proposition Kervaire} applies. In either case, the parallelizable manifold (\ref{n1manifold}) or (\ref{Op2}) can be embedded into $\R^n$. If the dimension $(n - 1)$ is even and $X(G, n - 1)$ does not have trivial tangent bundle, there are natural numbers $r_1$ and $r_2$ such that the connected sum\begin{equation}\label{Op3}X(G, n - 1)\# (r_1 - 1)(S^2\times S^{n - 3}) \#( r_2 - 1)(S^3\times S^{n - 4})\end{equation}has zero Euler characteristic and trivial tangent bundle by  Proposition \ref{Proposition Kervaire}. The manifold (\ref{Op3}) embeds into $\R^n$. This concludes the  proof of the claim of Item (iii). Relabeling the manifolds now yields a proof of the proposition. 
\end{proof}

\section{Almost-complex structures on $\R^{2m}$ and some pseudo-holomorphic embeddings}\label{Section AlmCmplxStr} 

A complex structure $J_M:TM\rightarrow TM$ induces a preferred orientation on the manifold $M$. If $M$ is an oriented manifold and the orientations coincide, we say that $J_M$ is positive; otherwise, we say that $J_M$ is negative. The space of positive linear complex structures on Euclidean space $\R^{2n}$\begin{equation}\widetilde{\Gamma}(n) = \Gl^+(\R^{2n})/\Gl(\C^n)\end{equation} consists of matrices that are conjugate to\[J_n = \underset{n}\bigoplus \left( \begin{array}{cc}
0 & - 1  \\
1 & 0 \end{array}\right)\]
by an element $\alpha\in \Gl^+(\R^{2n})$. An almost-complex structure $(\R^{2n}, J_n)$ is a map\begin{equation}J: \R^{2n}\rightarrow \widetilde{\Gamma}(n).\end{equation}

\begin{lemma}\label{Lemma Nullhomotopy} Let $M$ be a closed smooth manifold that embeds into $\R^{2n}$, and let $J:M\rightarrow \widetilde{\Gamma}(n)$ be a smooth map. There is a smooth extension $\widetilde{J}:\R^{2n}\rightarrow \widetilde{\Gamma}(n)$ of $J$ if and only if $J$ is homotopic to a constant map.
\end{lemma}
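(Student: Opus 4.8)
The plan is to prove this as a standard obstruction-theoretic / extension statement. The target $\widetilde{\Gamma}(n) = \Gl^+(\R^{2n})/\Gl(\C^n)$ is a homogeneous space, hence a manifold, and since $\Gl^+(\R^{2n})$ deformation retracts onto $\So(2n)$ and $\Gl(\C^n)$ onto $\U(n)$, we have $\widetilde{\Gamma}(n) \simeq \So(2n)/\U(n)$, which is a closed manifold; in particular it is path-connected and simply connected (for $n \geq 1$), and more importantly it is a finite CW complex with no boundary. I would first record this, so that "homotopic to a constant map" is an unambiguous condition and so that we may freely apply cellular/simplicial approximation.

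**The two directions.** The ``only if'' direction is immediate: if $\widetilde{J}:\R^{2n}\rightarrow \widetilde{\Gamma}(n)$ extends $J$, then since $\R^{2n}$ is contractible, $\widetilde{J}$ is nullhomotopic, and restricting the nullhomotopy to $M \subset \R^{2n}$ exhibits $J = \widetilde{J}|_M$ as nullhomotopic. For the ``if'' direction, suppose $H: M \times [0,1] \rightarrow \widetilde{\Gamma}(n)$ is a homotopy from $J = H(-,0)$ to a constant map $H(-,1) \equiv c$. Pick a tubular neighborhood $N$ of $M$ in $\R^{2n}$, diffeomorphic to the total space of the (trivial, by codimension considerations — though triviality is not even needed here) normal bundle, with projection $\pi: N \rightarrow M$. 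On a slightly smaller closed neighborhood one can use $\pi$ and the homotopy $H$ together with the radial coordinate to define a smooth map $N \rightarrow \widetilde{\Gamma}(n)$ which equals $J \circ \pi$ near $M$ and equals the constant $c$ near $\partial N$ (concretely: compose $\pi$ with $J$ for points close to $M$, and interpolate via $H$ as the normal distance grows, reaching the constant value $c$ before leaving $N$). Then define $\widetilde{J}$ on all of $\R^{2n}$ by this map on $N$ and by the constant $c$ on $\R^{2n} \setminus N$; the two definitions agree on the overlap (a collar near $\partial N$), so $\widetilde{J}$ is a well-defined smooth map $\R^{2n} \rightarrow \widetilde{\Gamma}(n)$ restricting to $J$ on $M$.

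**The main obstacle and how to handle it.** The one genuine subtlety is smoothness of the patched map across $\partial N$: one must arrange the interpolation using $H$ to be constant (equal to $c$) on an honest collar neighborhood of $\partial N$, not merely at $\partial N$ itself, so that extending by the constant $c$ is smooth. This is handled by reparametrizing the homotopy parameter via a smooth nondecreasing bump function $\rho: [0,\infty) \rightarrow [0,1]$ with $\rho \equiv 0$ near $0$ and $\rho \equiv 1$ on $[r_0, \infty)$ for some $r_0$ smaller than the tube radius, and composing: on $N$, send a point $x$ at normal distance $t$ over $\pi(x) \in M$ to $H(\pi(x), \rho(t))$. Near $M$ this is $H(\pi(x),0) = J(\pi(x))$; for $t \geq r_0$ it is the constant $c$. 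That $J$ was assumed smooth (the hypothesis says smooth map) makes the starting boundary behavior automatic. The remaining points — that $\widetilde{\Gamma}(n)$ has no boundary so there is no target-side obstruction, and that the tubular neighborhood theorem applies since $M$ is a closed smooth submanifold — are routine. I would present the argument in this order: (1) identify $\widetilde{\Gamma}(n) \simeq \So(2n)/\U(n)$ and note it is a closed manifold; (2) the easy ``only if'' direction; (3) the tubular neighborhood and the reparametrized-homotopy construction for ``if''; (4) the smooth-patching check.
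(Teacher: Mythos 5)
Your argument is correct, and it is worth noting that the paper itself does not prove this lemma at all: it simply cites \cite[Section 2]{[DZ1]}, where the extension criterion is established by essentially the same reasoning you give (contractibility of $\R^{2n}$ for the ``only if'' direction, and an extension over a tubular neighborhood of $M$, constant outside, for the ``if'' direction). So your proposal supplies a self-contained proof of exactly the expected kind rather than a genuinely different route. Two small points to tighten. First, the homotopy $H\colon M\times[0,1]\rightarrow \widetilde{\Gamma}(n)$ furnished by the hypothesis is a priori only continuous, so the formula $x\mapsto H(\pi(x),\rho(t(x)))$ need not be smooth where $\rho\in(0,1)$; you should either invoke Whitney approximation to replace $H$ by a smooth homotopy rel the (smooth) endpoint maps, or smooth the resulting continuous extension relative to a neighborhood of $M$ where it already equals $J\circ\pi$. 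Second, your parenthetical that the normal bundle of $M$ in $\R^{2n}$ is trivial ``by codimension considerations'' is not justified (the codimension here is arbitrary), but as you yourself observe, triviality is never used: the tubular neighborhood, the fiberwise radial coordinate coming from a choice of metric, and the compactness of $M$ are all that the construction needs. With the smoothing remark added, the proof is complete.
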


A proof of Lemma \ref{Lemma Nullhomotopy} can be found in \cite[Section 2]{[DZ1]}. There is a homotopy equivalence between $\widetilde{\Gamma}(n)$ and the homogeneous space\begin{equation}\Gamma(n) = \So(2n)/\U(n),\end{equation}and they share the same homotopy groups. The homotopy groups $\pi_k(\Gamma(n))$ in the range $k\leq 2n - 2$ are called stable homotopy groups, and are given by
\label{HomotopyGroups}\[ \pi_k(\Gamma(n)) \cong \pi_{k + 1}(\So(2n)) =
  \begin{cases}
    0       & \quad \text{if } k = 1, 3, 4, 5\\
    \Z  & \quad \text{if } k = 2, 6\\
    \Z/2  & \quad \text{if } k = 0, 7\\
  \end{cases}
\Mod 8\]
as computed by Bott in \cite{[Bott]}.

We now begin our discussion of pseudo-holomorphic embeddings. We point out the following constraint on the tangent bundle of a smooth manifold that admits a codimension two pseudo-holomorphic embedding into Euclidean space and prove Theorem \ref{Theorem ParaPS}. 

\begin{theorem}\label{Theorem PHP} If there are almost-complex structures $(M, J_M)$ and $(\R^{2n + 2}, \hat{J})$ such that there is a pseudo-holomorphic embedding $f:(M, J_M)\hookrightarrow (\R^{2n + 2}, \hat{J})$, then the $2n$-manifold $M$ is parallelizable. 

Let $M$ be a parallelizable 2n-manifold that smoothly embeds into $\R^{2n + 2}$. There exist almost-complex structures $(M, J_M)$ and $(\R^{2n + 2}, \hat{J})$ for which there is a pseudo-holomorphic embedding $f:(M, J_M)\hookrightarrow (\R^{2n + 2}, \hat{J})$.
\end{theorem}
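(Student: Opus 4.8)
The plan is to prove the two assertions separately, since they are essentially converse to each other. For the first (necessity) direction, suppose $f:(M, J_M)\hookrightarrow (\R^{2n+2}, \hat{J})$ is a pseudo-holomorphic embedding. The defining condition $Tf\circ J_M = \hat{J}\circ Tf$ means that $f_\ast T_pM$ is a $\hat{J}$-complex subspace of $T_{f(p)}\R^{2n+2}$ of complex dimension $n$ at every point $p\in M$. Hence the normal bundle $\nu_f(M)$ inside $(\R^{2n+2}, \hat{J})$ inherits a complex line bundle structure as the $\hat{J}$-orthogonal complement; more to the point, $TM\oplus\nu_f(M) = f^\ast T\R^{2n+2}$ is a trivial rank-$(2n+2)$ real bundle, and this splitting is a splitting of complex bundles with respect to the pulled-back $\hat{J}$. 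So $TM$ is stably complex and $M$ is in particular orientable and stably parallelizable (its stable tangent bundle is the formal inverse of a complex line bundle, but a complex line bundle over $M$ which is itself a complex summand of a trivial bundle — one should check the line bundle is itself trivial, or at least that $TM$ is stably trivial: since $TM\oplus\nu = \epsilon^{2n+2}$ and $\nu$ is a complex line bundle, $c_1(\nu) = -c_1(TM,J_M)$ and also $w_2(\nu)=w_2(TM)$; triviality of $\nu$ as a complex line bundle is equivalent to $c_1(\nu)=0$, which need not hold a priori). The cleaner route: $M$ is a closed $2n$-manifold embedding with codimension two into $\R^{2n+2}$, so by \cite[Chapter VIII, Theorem 2]{[Kirby]} the normal bundle $\nu_f(M)$ is trivial, hence $TM$ is stably trivial, i.e. $M$ is stably parallelizable. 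Now invoke Item (ii) of Proposition \ref{Proposition Kervaire}: a closed smooth orientable $2n$-manifold embedding into $\R^{2n+2}$ is parallelizable iff $\chi(M)=0$. So it remains to show $\chi(M)=0$; this is where the pseudo-holomorphic hypothesis, as opposed to a mere smooth embedding, must be used — indeed $S^2\times S^4$ embeds smoothly in $\R^8$ but is not parallelizable. Since $TM$ carries the complex structure $J_M$, the top Chern class $c_n(TM,J_M)$ equals the Euler class $e(TM)$, so $\chi(M)=\langle c_n(TM,J_M),[M]\rangle$. But from the complex splitting $f^\ast T\R^{2n+2}\cong (TM,J_M)\oplus(\nu_f,\hat{J})$ with $f^\ast T\R^{2n+2}$ trivial, the total Chern class satisfies $c(TM,J_M)\cdot c(\nu_f,\hat{J})=1$, and since $\nu_f$ is a \emph{line} bundle this forces $c_j(TM,J_M)=(-c_1(\nu_f))^j$ for all $j\ge 1$, hence $c_n(TM,J_M)=(-c_1(\nu_f))^n\in H^{2n}(M;\Z)$. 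Thus $\chi(M)=\langle c_1(\nu_f)^n,[M]\rangle$ up to sign. Finally $c_1(\nu_f)$ is pulled back — since $\nu_f$ extends over a neighbourhood of $f(M)$ in $\R^{2n+2}$ as a complex line bundle (it is the $\hat{J}$-normal bundle of the submanifold, which one can fatten), and $\R^{2n+2}$ is contractible, so $c_1(\nu_f)=0$, giving $\chi(M)=0$. Alternatively, and more robustly: $c_1(\nu_f)=-c_1(TM,J_M)$ lies in $H^2(M;\Z)$, and $c_1(\nu_f)^n\in H^{2n}(M;\Z)$ is a cup product of $n$ classes all equal; combine with the fact that $\nu_f$ is trivial as a \emph{real} bundle (Kirby) so $c_1(\nu_f)$ is $2$-torsion, whence $c_1(\nu_f)^n$ is $2$-torsion for $n\ge 1$ and, evaluated on the fundamental class, gives $0$ — so $\chi(M)=0$ and Proposition \ref{Proposition Kervaire}(ii) finishes this direction.

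For the second (sufficiency) direction, suppose $M$ is a parallelizable $2n$-manifold with a smooth embedding $g:M\hookrightarrow\R^{2n+2}$. The idea, following the construction recalled in the proof of Proposition \ref{Proposition Kervaire}(ii), is to use the triviality of $TM$ and of $\nu_g(M)$ (codimension two, hence trivial normal bundle by \cite{[Kirby]}) to build compatible almost-complex structures. Fix a trivialization $TM\cong M\times\R^{2n}$ and a trivialization $\nu_g(M)\cong M\times\R^2$; together these give a trivialization $g^\ast T\R^{2n+2}\cong M\times\R^{2n+2}$, i.e. a generalized Gauss map $G:M\to V_2(\R^{2n+2})$ which, as in the cited proof, is nullhomotopic because the only obstruction $\chi(M)$ vanishes (parallelizable, so $\chi(M)=0$). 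A nullhomotopy of $G$ lets us isotope the embedding so that near $f(M)$ the normal $2$-plane field is the constant plane spanned by $\partial/\partial x_{2n+1},\partial/\partial x_{2n+2}$; identifying $\R^{2n+2}\cong\C^{n+1}$ with coordinates $(z_1,\dots,z_{n+1})$ where $z_{n+1}=x_{2n+1}+ix_{2n+2}$, the submanifold $f(M)$ then has, at each of its points, tangent space equal to $\C^n\times\{0\}$, which is a complex subspace of the standard $\C^{n+1}$. Then take $\hat{J}$ to be the \emph{standard} integrable complex structure on $\R^{2n+2}=\C^{n+1}$, and define $J_M$ on $M$ by pulling back $\hat{J}$ via $Tf$: this is well-defined precisely because $f_\ast T_pM$ is $\hat{J}$-invariant at every $p$, and it is an almost-complex structure on $M$ by construction. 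Then $Tf\circ J_M=\hat{J}\circ Tf$ holds tautologically, so $f$ is a pseudo-holomorphic embedding. (Here $\hat{J}$ is even integrable; the statement only asks for \emph{some} $\hat{J}$.) One small technical point to address: the nullhomotopy of $G$ gives a homotopy of the Gauss map, which by an isotopy extension / Hirsch–Smale immersion-theoretic argument (or directly, since we may modify the embedding in a tubular neighbourhood of a collar in $\R^{2n+2}$) can be realized by an ambient isotopy making the normal plane field constant; this is exactly the move performed in the proof of Proposition \ref{Proposition Kervaire}(ii) reproduced above, so I would cite that.

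The main obstacle I expect is the first direction — specifically, isolating where the pseudo-holomorphic (rather than merely smooth) hypothesis is forcing $\chi(M)=0$, and making the Chern-class bookkeeping clean. The key leverage is that $\nu_f$ is simultaneously (a) a \emph{complex line} bundle (from $\hat{J}$-orthogonality) and (b) a \emph{trivial real} rank-two bundle (from codimension-two embedding, Kirby), which together pin down $c_1(\nu_f)$ to be a $2$-torsion class; then $c_n(TM,J_M)=\pm c_1(\nu_f)^n$ forces the Euler number to vanish, and Proposition \ref{Proposition Kervaire}(ii) converts this into parallelizability. The second direction is essentially a repackaging of the construction already displayed in the proof of Proposition \ref{Proposition Kervaire}(ii), so the work there is mostly citation and verifying the tautological compatibility $Tf\circ J_M=\hat{J}\circ Tf$.
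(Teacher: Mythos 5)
Your first (necessity) direction is fine and is essentially the paper's argument: triviality of the complex splitting $f^\ast T\R^{2n+2}\cong TM\oplus\nu_f$ kills $c_n(TM,J_M)$, hence $\chi(M)=0$, and Proposition \ref{Proposition Kervaire}(ii) gives parallelizability. (Two small remarks: your ``pulled back from a contractible space'' aside is spurious, since the tubular neighbourhood retracts onto $M$, not a point; and the clean statement is that a complex line bundle whose underlying real bundle is trivial has $c_1=\pm e=0$ exactly, which is stronger than your ``$2$-torsion'' claim and makes the bookkeeping immediate.)

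The sufficiency direction, however, has a fatal gap. You claim that a nullhomotopy of the generalized Gauss map lets you isotope the embedding so that the normal (equivalently tangent) $2$-plane field of $f(M)$ is a \emph{constant} plane, and you then take $\hat{J}$ to be the standard integrable structure on $\C^{n+1}$. Both steps fail. A nullhomotopy of $G:M\to V_2(\R^{2n+2})$ is a homotopy through maps (bundle monomorphisms), not through embeddings, and no isotopy can make the tangent planes of a closed submanifold constant: if $f_\ast T_pM=\C^n\times\{0\}$ for all $p$, each component of $f(M)$ would be open and closed in an affine translate of $\C^n\times\{0\}$, contradicting compactness. More fundamentally, the standard integrable $\hat{J}$ can never work: by the maximum principle $\C^{n+1}$ contains no closed positive-dimensional complex submanifolds (the coordinate functions would be constant), a point the paper makes explicitly in the introduction; this is precisely why $\hat{J}$ must be allowed to be non-standard and non-integrable. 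The paper's route avoids your step entirely: using the trivializations of $TM$ and $\nu_f$ (the nullhomotopy of $G$), one defines a \emph{fiberwise} complex structure $J_M\oplus J_\nu$ on $T\R^{2n+2}|_M$ making $TM$ a complex subbundle, observes that the resulting map $g:M\to\widetilde{\Gamma}(n+1)$ is nullhomotopic by construction, and then applies Lemma \ref{Lemma Nullhomotopy} (Di Scala--Zuddas \cite{[DZ1]}) to extend $g$ to an almost-complex structure $\hat{J}$ on all of $\R^{2n+2}$; the embedding is then pseudo-holomorphic with respect to $J_M$ and this extended $\hat{J}$. Replacing your ``straighten the embedding and use the standard $\C^{n+1}$'' step by this extension argument is not a cosmetic fix but the essential content of this half of the theorem.
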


A parallelizable $2n$-manifold $M$ admits an almost-complex structure $J_M$ since there is a basis for $TM$.


\begin{proof} Suppose there are almost-complex structures on $M$ and $\R^{2n + 2}$ such that the pseudo-holomorphic embedding $f$ exists. The claim follows from Kervaire's Proposition \ref{Proposition Kervaire} once we have shown that $\chi(M) = 0$. To see that the Euler characteristic of the manifold vanishes, we make use of the property that the normal bundle $\nu_f(M)$ is a trivial complex line bundle, and observe that the claim follows from the following standard argument. The identity $\chi(M) = \langle c_n(M), [M]\rangle$ suggests us to argue that the n-th Chern class $c_n(M) = c_n(TM)$ vanishes. Since the pullback $f^{\ast}(T\R^{2n + 2}) = T\R^{2n + 2}|_M = TM\oplus \nu_f(M)$ is the trivial bundle, its Chern class satisfies $c(T\R^{2n + 2}|_M) = 1$ and the Whitney product formula implies $c_n(M) = 0$, as claimed \cite{[MS]}. 

Assume now that there is an embedding $M\hookrightarrow \R^{2n + 2}$ and that the bundle $TM$ is trivial. We argue that there is a nullhomotopic map $g: M\rightarrow \widetilde{\Gamma}(n + 1)$ and apply Lemma \ref{Lemma Nullhomotopy} to conclude the proof of the claim. To construct the map $g$, we equip the bundle $f^{\ast}(T\R^{2n + 2}) = T\R^{2n + 2}|_M$ with the following choices of complex structures. As it was done in the proof of Proposition \ref{Proposition Kervaire}, consider $M$ as a real submanifold of $\C^{n + 1} \cong \R^{2n + 2}$. Since $\chi(M) = 0$, the curvature integra $G_\ast([M])$ vanishes and the generalized Gauss map $G$ is nullhomotopic. This implies that there is a vector bundle monomorphism that identifies $TM$ with $M\times \C^n$ and $\nu_f$ with the complementary $M\times \C$. The tangent bundle $TM$ is homotopy equivalent to $M\times \C^n\subset T\C^{n + 1}|_M = M\times \C^{n + 1}$, and the identification induces an almost-complex structure $J_M$ on $M$ (cf. \cite[Section 6]{[DKZ]}). The normal bundle $\nu_f(M)$ is homotopic to a trivial complex line bundle, and there is an analogous complex structure $J_{\nu}$ on it. This yields a complex structure $(T\R^{2n + 2}|_M, J_M\oplus J_{\nu})$, and hence a map $g:M\rightarrow \widetilde{\Gamma}(n + 1)$ that is nullhomotopic by our choices of complex structures. Lemma \ref{Lemma Nullhomotopy} implies that there is a smooth extension $\widetilde{g}: \R^{2n + 2}\rightarrow \widetilde{\Gamma}(n + 1)$, and hence the pseudo-holomorphic embedding $f$ exists.

\end{proof}

Di Scala-Vezzoni \cite[Theorem 1.2]{[DVz]} have shown that the torus $T^{2n}$ admits a pseudo-holomorphic embedding into $\R^{4n}$ for certain choices of almost-complex structures on the manifolds. Theorem \ref{Theorem PHP} yields an improvement in the sense that it minimizes the codimension of the pseudo-holomorphic embedding.

\begin{corollary} For every almost-complex torus $(T^{2n}, J)$ for which the tangent bundle is trivial as a complex bundle, there is an almost-complex structure $(\R^{2n + 2}, \hat{J})$ and a pseudo-holomorphic embedding $(T^{2n}, J)\hookrightarrow (\R^{2n + 2}, \hat{J})$ for $n\in \N$.
\end{corollary}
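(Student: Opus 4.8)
The plan is to deduce this corollary directly from Theorem~\ref{Theorem PHP} by checking that an almost-complex torus $(T^{2n}, J)$ whose tangent bundle is complex-trivial meets the two hypotheses of the second half of that theorem: namely that $T^{2n}$ is parallelizable and that $T^{2n}$ embeds smoothly into $\R^{2n+2}$. Both are classical. First I would recall that $T^{2n} = S^1 \times \cdots \times S^1$ is parallelizable, since a product of parallelizable manifolds is parallelizable and $S^1$ is parallelizable; in fact the hypothesis that $TT^{2n}$ is trivial as a complex bundle already forces it to be trivial as a real bundle, so parallelizability is immediate from the assumption. Next I would invoke the standard embedding $S^{m_1}\times S^{m_2}\hookrightarrow \R^{m_1+m_2+1}$ (used already in the proof of Proposition~\ref{Proposition G}), applied iteratively: $T^2 = S^1\times S^1 \hookrightarrow \R^3$, and then $T^{k+1} = T^k \times S^1 \hookrightarrow \R^{k+2}$ by taking the product of the given embedding $T^k \hookrightarrow \R^{k+1}$ with $S^1 \hookrightarrow \R^2$ and using the join-type construction, so that $T^{2n}\hookrightarrow \R^{2n+1}\subset \R^{2n+2}$.

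With these two facts in hand, the second paragraph of Theorem~\ref{Theorem PHP} applies verbatim: there exist almost-complex structures $(T^{2n}, J_M)$ and $(\R^{2n+2},\hat J)$ admitting a pseudo-holomorphic embedding $(T^{2n}, J_M)\hookrightarrow(\R^{2n+2},\hat J)$. The only remaining point is that the corollary is phrased for \emph{every} complex-trivializing $J$ on the torus, i.e.\ we want the pseudo-holomorphic embedding with the \emph{prescribed} $J$ rather than the auxiliary $J_M$ produced by the theorem. Here I would note that the construction in the proof of Theorem~\ref{Theorem PHP} has this flexibility built in: the almost-complex structure on $M$ is obtained from a choice of vector bundle isomorphism $TM \cong M\times\C^n$ inside $T\R^{2n+2}|_M = M\times\C^{n+1}$, and since by hypothesis $(TT^{2n}, J)$ is already isomorphic to the trivial complex bundle $T^{2n}\times\C^n$, we may choose the trivialization of the Gauss map so that the induced complex structure on $TT^{2n}$ is exactly $J$. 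Then $g:T^{2n}\to\widetilde\Gamma(n+1)$ is still nullhomotopic (it is homotopic to a constant by the choice of complex structure on the trivial bundle $T\R^{2n+2}|_M$, independent of which trivialization of $TT^{2n}\cong T^{2n}\times\C^n$ we picked), Lemma~\ref{Lemma Nullhomotopy} gives an extension $\widetilde g$, and the resulting $\hat J$ makes $f$ pseudo-holomorphic with respect to the given $J$.

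I do not expect a genuine obstacle here; the corollary is essentially a specialization of Theorem~\ref{Theorem PHP}. The one subtlety worth stating carefully is the last step above: one must check that requiring the induced almost-complex structure on the torus to coincide with a \emph{prescribed} complex-trivial $J$ does not spoil the nullhomotopy of the map into $\widetilde\Gamma(n+1)$. This is true because the relevant map records the complex structure on the \emph{whole} restricted tangent bundle $M\times\C^{n+1}$ relative to its standard real structure, and any two choices of complex structure on the summand $TM\cong M\times\C^n$ that are conjugate by a $\Gl(\C^n)$-valued map over $M$ (which is the case for two trivial complex bundle structures) yield homotopic maps to $\widetilde\Gamma(n+1)$, all of them nullhomotopic. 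With that remark the proof is complete; one could also simply observe that since $(T^{2n},J)$ is already complex-parallelizable, the generalized Gauss map argument directly produces the nullhomotopic $g$ adapted to $J$, bypassing any need to modify $J$ at all.
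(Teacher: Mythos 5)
Your reduction to Theorem \ref{Theorem PHP} is exactly how the paper treats this corollary (it offers no separate proof: the torus is parallelizable, embeds in $\R^{2n+1}\subset\R^{2n+2}$, and the second half of the theorem is invoked), so that part of your proposal is fine and matches the source. The problem is the step you yourself single out as the ``only subtlety'': arranging the construction so that the induced structure on $TT^{2n}$ is the \emph{prescribed} $J$. Your justification is incorrect. Two complex structures on the fixed, canonically trivialized real bundle $T\R^{2n+2}|_M\cong M\times\R^{2n+2}$ which are isomorphic as complex bundles (e.g.\ both trivial) are intertwined by a \emph{real} gauge transformation $\phi\colon M\to \Gl^+(2n+2,\R)$, not by a $\Gl(\C^{n+1})$-valued one; a $\Gl(\C)$-valued map conjugates the standard structure only to itself. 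Conjugation by a real gauge transformation does not in general preserve the homotopy class of the associated map $g\colon M\to\widetilde{\Gamma}(n+1)$: the class moves by elements in the image of $[M,\So(2n+2)]\to[M,\widetilde{\Gamma}(n+1)]$, and stably this map is nonzero on homotopy in degrees $\equiv 0,7 \bmod 8$ (it is multiplication by $\eta$ in $KO$-theory), degrees in which $T^{2n}$ has plenty of cells once $2n\geq 7$. Equivalently, triviality of $(TM\oplus\nu_f, J\oplus J_\nu)$ as a complex bundle only says that $\imath\circ g$ is nullhomotopic in $\Bu(n+1)$, not that $g$ is nullhomotopic in $\widetilde{\Gamma}(n+1)$, which is what Lemma \ref{Lemma Nullhomotopy} requires; the paper's own proof of Theorem \ref{Theorem 6D} makes clear that the homotopy class of $g$ carries more than Chern-class data (e.g.\ $2\Omega_2(g)=c_1$, plus $\Z/2$-valued obstructions in degrees $\equiv 0,7\bmod 8$).

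Concretely, the assertion that ``we may choose the trivialization of the Gauss map so that the induced complex structure is exactly $J$'' is precisely what needs proof: the trivialization of $TM$ produced in the proof of Theorem \ref{Theorem PHP} is constrained by the nullhomotopy of the generalized Gauss map, and replacing it by an arbitrary complex trivialization of $(TT^{2n},J)$ changes $g$ by a real gauge transformation whose effect on $[T^{2n},\widetilde{\Gamma}(n+1)]$ you have not controlled. Your closing remark (``the Gauss map argument directly produces the nullhomotopic $g$ adapted to $J$'') restates the gap rather than closing it. For $n\leq 3$ the relevant obstruction groups vanish or are detected by $c_1$ and $\chi$, so your conclusion can be salvaged there, but the corollary is asserted for all $n$, and for $n\geq 4$ an actual argument (obstruction-theoretic or otherwise) is missing. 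To be fair, the paper itself is silent on this ``for every $J$'' refinement—its Theorem \ref{Theorem PHP} only produces \emph{some} $J_M$—so the honest statement is that your proof covers exactly what the paper's one-line deduction covers, and the prescribed-$J$ strengthening remains unproved in your write-up.
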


\section{CR regular embeddings}\label{Section CmplxPts} Let $M$ be a closed smooth real oriented $2n$-manifold. A way to obtain a CR regular embedding into $\C^{n + 1}$ is to start with a generic smooth embedding\begin{equation}\label{CREmb}f:M\hookrightarrow \R^{2n + 2}\cong \C^{n + 1}\end{equation} that may have complex or CR singular points, and then perturb it into a CR regular one. The complex points of a generic embedding are isolated by Thom transversality theorem \cite{[Thom]}; see \cite[Definition 2.2]{[L]} for a rigorous definition of our use of the adjective 'generic'. Slapar \cite{[S1]} studied necessary and sufficient conditions for such perturbation to exist. We describe the scenario beginning by telling complex points $p\in M$ apart as follows. If the orientation of $T_pM$ agrees with the induced orientation of $T_pM\subset TX$ as a complex subspace, then $p$ is positive. Otherwise, the complex point $p$ is negative. Next consider coordinates $(z, \omega)\in \C^n\times \C$, and $n$ by $n$ matrices $A, B$ with complex entries and such that $B = B^T$. An appropriate choice of coordinates $(z, w)$ and Taylor expansion of $f$ allows for a local expression at $p\in M$ as\begin{equation}w = \overline{z}^{T}Az + \re(z^TBz) + o(|z|^2).\end{equation}Complex points can be then classified in terms of the sign of the determinant of the associated matrix\[ \left( \begin{array}{cc}
A & \overline{B}  \\
B & \overline{A} \end{array}\right).\]

The corresponding complex point is said to be elliptic if the determinant is positive. If the determinant is negative, the complex point is said to be hyperbolic. The reader is directed toward \cite{[S1], [S2]} and the references there for details.

Denote by $e_{\pm}(M)$ the number of positive/negative elliptic complex points and by $h_{\pm}(M)$ the number of positive/negative hyperbolic complex points on $M$.
The Lai indices \cite{[L]} are defined as\begin{equation}I_{\pm}(M): = e_{\pm}(M) - h_{\pm}(M)\end{equation}and can be expressed in terms of characteristic classes these indices as the formula\begin{equation}\label{Equation LI}2I_{\pm}(M) = \chi(M) + \big \langle \overset{n}{\underset{k = 0}\sum} (\pm1)^{k + 1} e^k(\nu(M)) \cup  c_{n - k}(TX|_{M}), [M] \big \rangle,\end{equation}where $\nu(M)$ stands for the normal bundle of $M\rightarrow X$, $e$ and $c_{n - k}$ are the Euler and (n - k)th Chern classes, respectively. These indices are invariant under regular isotopies of embeddings and Slapar has shown that they are the only topological invariants of complex points up to isotopy \cite[Corollary 1.2]{[S1]} and \cite[Theorem 1]{[S3]}. Their vanishing $I_{\pm}(M) = 0$ is a necessary condition for the existence of a regular homotopy between $f$ and a CR regular embedding. 

\begin{proposition}\label{Proposition CRParallelizable} If there is a CR regular embedding\begin{equation}\label{Hypothesis CR}f: M\hookrightarrow \C^{n + 1}\end{equation}for a closed smooth real 2n-manifold $M$, then the tangent bundle $TM$ is trivial. 
\end{proposition}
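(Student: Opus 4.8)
The plan is to derive triviality of $TM$ from a dimension count on the Euler characteristic, exactly as in the pseudo-holomorphic case (Theorem \ref{Theorem PHP}), using Proposition \ref{Proposition Kervaire}(ii) once we know $M$ is stably parallelizable and $\chi(M) = 0$. Stable parallelizability is immediate: a CR regular embedding is in particular a smooth embedding $f: M \hookrightarrow \C^{n+1} \cong \R^{2n+2}$ of codimension two, so the normal bundle $\nu_f(M)$ is trivial by \cite[Chapter VIII. Theorem 2]{[Kirby]}, and $TM \oplus \nu_f(M) = f^\ast T\R^{2n+2}$ is trivial; hence $TM \oplus \epsilon^2$ is trivial, so $M$ is stably parallelizable, and (when $n$ is odd) we also note $\chi(M) = 0$ automatically in odd real dimension, so the content is really in the even case.

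The heart of the argument is to show $\chi(M) = 0$ under the hypothesis that a CR regular embedding exists. Here I would invoke the Lai index formula (\ref{Equation LI}). For a CR regular embedding there are no complex points at all, so $e_\pm(M) = h_\pm(M) = 0$, and therefore both Lai indices vanish: $I_+(M) = I_-(M) = 0$. Adding the two instances of (\ref{Equation LI}) for the $+$ and $-$ signs, the odd-$k$ terms $e^k(\nu(M)) \cup c_{n-k}(TX|_M)$ survive with a factor and the even-$k$ terms cancel (or vice versa, depending on parity bookkeeping of $(\pm 1)^{k+1}$); but in our situation $TX|_M = f^\ast T\C^{n+1}$ is the trivial bundle, so $c_{n-k}(TX|_M) = 0$ for all $k < n$ and only the $k = n$ term, namely $e^n(\nu(M)) \cup c_0 = e^n(\nu(M))$, can contribute. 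Since $\nu(M) = \nu_f(M)$ is the trivial complex line bundle, $e(\nu(M)) = 0$, so that term vanishes too. Thus $0 = 2I_+(M) + 2I_-(M) = 2\chi(M)$ (after the cancellation), giving $\chi(M) = 0$. Alternatively, and more simply, one can bypass the Lai formula: as in the proof of Proposition \ref{Proposition Kervaire}, the triviality of $f^\ast T\R^{2n+2} = TM \oplus \nu_f(M)$ forces $c(TM) = 1$ by the Whitney formula applied to the (complex) splitting, in particular $c_n(TM) = 0$, and $\chi(M) = \langle c_n(TM), [M]\rangle = 0$.

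With $M$ stably parallelizable and $\chi(M) = 0$, Proposition \ref{Proposition Kervaire}(ii) yields that $TM$ is trivial, completing the proof. The only subtlety — and the step I would be most careful about — is making sure the Chern classes appearing in (\ref{Equation LI}) genuinely refer to $TX|_M$ with its complex structure (so that triviality of $f^\ast T\C^{n+1}$ really kills them) and that the Euler class $e(\nu(M))$ is that of the rank-two real normal bundle, which is trivial here; once the bundles are correctly identified the computation is routine. I would present the quick Whitney-formula route as the main argument and mention the Lai-index route as a remark, since the former makes the analogy with Theorem \ref{Theorem PHP} transparent and needs nothing beyond what is already in Section \ref{Section TrivialTM}.
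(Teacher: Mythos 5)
Your first argument --- CR regularity means there are no complex points, so $I_{\pm}(M) = 0$; in the Lai formula (\ref{Equation LI}) with $X = \C^{n+1}$ all terms $c_{n-k}(TX|_M)$ with $k < n$ vanish and the $k = n$ term dies because the codimension-two normal bundle is trivial, hence $\chi(M) = 0$; then Proposition \ref{Proposition Kervaire}(ii) applies --- is correct and is exactly the paper's proof (the paper compresses it to ``the Lai indices vanish, and in particular $\chi(M) = 0$''). Incidentally, you do not even need to add the $+$ and $-$ formulas: each instance already gives $2I_{\pm}(M) = \chi(M)$ once the bundle terms are killed. (Also, the parenthetical about $\chi(M)=0$ ``automatically in odd real dimension'' is vacuous here: $\dim_{\R} M = 2n$ is even regardless of the parity of $n$.)

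However, the route you say you would make the \emph{main} argument --- bypassing the Lai formula and deducing $c_n(TM) = 0$ from the Whitney formula applied to $TM \oplus \nu_f(M) = f^{\ast}T\R^{2n+2}$ --- does not work, and this is a genuine gap in your preferred presentation. In Theorem \ref{Theorem PHP} that computation is legitimate only because pseudo-holomorphicity makes $TM$ a $\hat{J}$-invariant, hence complex, subbundle of $T\R^{2n+2}|_M$, so that the splitting is a splitting of complex bundles and Chern classes of $TM$ are defined. Under the hypothesis of Proposition \ref{Proposition CRParallelizable}, $M$ carries no given almost-complex structure, and CR regularity says precisely that $T_pM \cap J\,T_pM$ has complex dimension $n-1$ at every point, i.e.\ $TM$ is nowhere a complex subbundle; so there is no complex Whitney splitting and no class $c_n(TM)$ to speak of. What your shortcut actually proves is only stable parallelizability, and stable parallelizability plus a codimension-two embedding does not force $\chi(M) = 0$: the sphere $S^2 \subset \C^2$, or $S^2 \times S^4 \subset \C^4$, is stably parallelizable with trivial normal bundle and nonzero Euler characteristic (such embeddings simply cannot be made CR regular, which is the content the Lai indices detect). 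So the Lai-index computation is not an optional remark; it is where the hypothesis of CR regularity enters, and it should remain the main argument.
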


\begin{proof} The existence of the CR regular embedding (\ref{Hypothesis CR}) implies that the Lai indices (\ref{Equation LI}) vanish, and in particular $\chi(M) = 0$. The claim now follows from the second item of Proposition \ref{Proposition Kervaire}.

\end{proof}

The converse statement is the main ingredient in the proof of Corollary \ref{Theorem 1} and we now state it; its proof follows immediately from the Cancellation theorem \cite[Corollary 1.2]{[S1]} (cf. \cite[Proposition 4]{[S2]}).

\begin{proposition}\label{Proposition Slapar}Slapar \cite{[S1], [S2]}. Let $M$ be a closed smooth real and oriented 2n-manifold, and let $X$ be a complex manifold of $\Dim_{\C}(X) = n + 1$ equipped with a Riemannian metric $h$. Suppose $f:M\rightarrow X$ is a smooth generic embedding, and $\epsilon > 0$. If $I_{\pm}(M) = 0$, then there is a regular isotopy\begin{equation}f_t: M\rightarrow X\end{equation}for $t\in [0, 1]$ that satisfies\begin{enumerate}[(i)]\item $f_0 = f$
\item $h(f_t(p), f(p))< \epsilon$ for every $t\in [0, 1]$ and every point $p\in M$, and\item $f_1:M\rightarrow X$ is a CR regular embedding.\end{enumerate}
\end{proposition}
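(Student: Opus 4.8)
The plan is to reduce the proposition to Slapar's local cancellation theorem for complex points, together with an elementary counting argument.

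\textbf{Step 1: structure of the complex locus.} By Thom transversality the complex points of the generic embedding $f$ form a finite set, and genericity makes each of them non-degenerate, hence elliptic or hyperbolic; since $M$ is oriented each also carries a sign as in the discussion above. Let $e_{\pm}, h_{\pm}$ denote the numbers of positive/negative elliptic and hyperbolic complex points, so that the complex locus is partitioned into four finite subsets of these cardinalities.

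\textbf{Step 2: pairing up.} The hypothesis $I_{\pm}(M) = 0$, together with the definition $I_{\pm}(M) = e_{\pm}(M) - h_{\pm}(M)$, gives $e_+ = h_+$ and $e_- = h_-$. Fix bijections matching each positive elliptic point with a positive hyperbolic point and each negative elliptic point with a negative hyperbolic point. For each matched pair choose a smoothly embedded arc in $M$ joining the two points whose interior avoids the (finite) complex locus; since $\Dim_{\R} M = 2n \ge 2$, a general position argument lets us take these arcs pairwise disjoint.

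\textbf{Step 3: cancel, one pair at a time.} Here I would invoke the Cancellation theorem \cite[Corollary 1.2]{[S1]} (cf.\ \cite[Proposition 4]{[S2]}): an elliptic and a hyperbolic complex point of the same sign joined by an embedded arc can be removed by a regular isotopy of $f$ that is supported in an arbitrarily small tubular neighbourhood of the arc and creates no new complex points (if necessary after first sliding the two points close together along the arc, which is itself a $C^0$-small move). Running through the matched pairs in turn, and at the $i$-th stage choosing the supporting neighbourhood to have $h$-diameter $< \epsilon$, one obtains regular isotopies whose concatenation, reparametrised over $[0,1]$ and smoothed at the junction times, is the desired $f_t$: condition (i) is $f_0 = f$; condition (ii) holds because each elementary move displaces points only inside a set of $h$-diameter $< \epsilon$; and after all $\tfrac12(e_+ + h_+ + e_- + h_-)$ cancellations the embedding $f_1$ has empty complex locus, so every point is CR regular, which is (iii). (If one insists on literally disjoint supports, shrink the arcs and tubes accordingly; otherwise simply treat the pairs sequentially, each step strictly reducing the number of complex points.)

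\textbf{Main obstacle.} The genuine content lies entirely in Step 3, namely the local cancellation theorem itself: the explicit construction, near a same-sign elliptic--hyperbolic pair, of a $C^0$-small regular isotopy of the embedding that annihilates the pair without spawning new complex points. That is precisely Slapar's result, extending the classical four-dimensional cancellation results for complex points, so it may legitimately be quoted here as a black box; everything else is the bookkeeping of Steps 1--2 and routine general position. I therefore expect no real difficulty beyond correctly citing and applying \cite[Corollary 1.2]{[S1]}.
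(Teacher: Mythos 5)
Your proposal is correct and follows essentially the same route as the paper, which attributes the statement to Slapar and deduces it directly from the Cancellation theorem \cite[Corollary 1.2]{[S1]} (cf. \cite[Proposition 4]{[S2]}); your Steps 1--2 are routine bookkeeping and Step 3 is exactly the cited black box. The only cosmetic point is that the $\epsilon$-closeness in (ii) should be phrased as a bound on the displacement $h(f_t(p), f(p))$ supplied by Slapar's theorem rather than on the diameter of the supporting tube, but this does not affect the argument.
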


Slapar's result has the following consequence.

\begin{corollary}\label{Corollary CRG} Let $M$ be a closed smooth real parallelizable 2n-manifold that admits an embedding into $\R^{2n + 2}$. There is a CR regular embedding $M\hookrightarrow \C^{n + 1}$.
\end{corollary}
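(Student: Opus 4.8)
\textbf{Proof plan for Corollary \ref{Corollary CRG}.}

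The plan is to reduce the statement directly to Proposition \ref{Proposition Slapar} by verifying that the Lai indices of a generic embedding vanish under the stated hypotheses. First I would fix a generic smooth embedding $f:M\hookrightarrow \R^{2n+2}\cong \C^{n+1}$, which exists since $M$ embeds into $\R^{2n+2}$ by assumption; after a small perturbation we may take the complex points of $f$ to be isolated and nondegenerate, so that the Lai indices $I_\pm(M)$ are defined and computed by formula (\ref{Equation LI}). The goal is then to show $I_+(M) = 0 = I_-(M)$, after which Proposition \ref{Proposition Slapar} (applied with $X = \C^{n+1}$ and the Euclidean metric) produces a regular isotopy from $f$ to a CR regular embedding $f_1 : M\hookrightarrow \C^{n+1}$, which is exactly the desired conclusion.

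To evaluate (\ref{Equation LI}) I would use parallelizability of $M$ together with codimension two. Since $M$ is parallelizable, $c_{n-k}(TM) = 0$ for $k < n$, and since $f$ is a codimension two embedding into Euclidean space the normal bundle $\nu(M)$ is trivial, so $e(\nu(M)) = 0$ and $e^k(\nu(M)) = 0$ for all $k\geq 1$. The only surviving term in the sum $\sum_{k=0}^n (\pm 1)^{k+1} e^k(\nu(M))\cup c_{n-k}(TX|_M)$ is therefore the $k = 0$ term, namely $(\pm 1)\, c_n(TX|_M)$; but $TX|_M = T\R^{2n+2}|_M = TM\oplus \nu(M)$ is the trivial bundle, so by the Whitney product formula $c_n(TX|_M) = 0$ as well. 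Hence the bracketed term in (\ref{Equation LI}) vanishes identically, giving $2I_\pm(M) = \chi(M)$. Finally, parallelizability forces $\chi(M) = 0$ (for instance because $c_n(TM) = 0$, or directly because a nowhere-zero vector field exists), so $I_+(M) = I_-(M) = 0$, as required.

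I do not anticipate a serious obstacle here; the argument is essentially a bookkeeping computation with characteristic classes feeding into Slapar's cancellation theorem, and all the needed inputs (triviality of the codimension two normal bundle, the Whitney formula, and Proposition \ref{Proposition Slapar}) are already available in the excerpt. The one point that deserves a careful sentence is the orientability hypothesis in Proposition \ref{Proposition Slapar}: a parallelizable manifold is orientable, so this is automatic. If one wished to be maximally careful one could also note that the vanishing of $\chi(M)$ is consistent with, and in fact forced by, the hypotheses of Proposition \ref{Proposition CRParallelizable} in the converse direction, so the two propositions together show that, for $2n$-manifolds embeddable in $\R^{2n+2}$, admitting a CR regular embedding into $\C^{n+1}$ is equivalent to parallelizability — which is Corollary \ref{Theorem CRParallelizable}.
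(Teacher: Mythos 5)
Your proposal is correct and follows essentially the same route as the paper: the paper's proof is precisely the observation that triviality of $TM$ and of the codimension-two normal bundle $\nu_f(M)$ forces the Lai indices in (\ref{Equation LI}) to vanish (including $\chi(M)=0$ from parallelizability), after which Proposition \ref{Proposition Slapar} yields the CR regular embedding. Your write-up simply spells out the characteristic-class bookkeeping that the paper leaves implicit.
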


\begin{proof} Since $TM$ and $\nu_f(M)$ are trivial bundles, the Lai indices (\ref{Equation LI}) are zero. The claim now follows from Proposition \ref{Proposition Slapar}. 
\end{proof}

\begin{remark}\emph{If we require the map that $f$ of Definition \ref{Definition CR embeddings} to be an immersion, we obtain the concept of a \emph{CR regular immersion $f:M\rightarrow \C^{n + 1}$.} Let $M$ be a closed smooth real parallelizable $2n$-manifold. There is a CR regular immersion of $M$ into $\C^{n + 1}$. If $M$ is parallelizable, then it immerses in $\R^{2n + 2}$ \cite[Corollary 3.3]{[Adachi]}. Proposition \ref{Proposition Slapar} can be adapted to immersions so that there is a regular homotopy between $f'$ and a CR regular immersion $f$ provided that the Lai indices vanish \cite[Proposition and Remark 2.1]{[S2]}.}
\end{remark}

\section{Proofs}

\subsection{Proof Theorem \ref{Theorem 6D}}\label{Section Proof6D} It was shown in the proof of Theorem \ref{Theorem PHP} that the existence of a pseudo-holomorphic embedding $f: (M, J)\hookrightarrow (\R^8, \hat{J})$ of a closed smooth orientable 6-manifold implies that the tangent bundle $TM$ is trivial and hence $c_3(M, J) = 0 = p_1(M)$. We now prove the converse using an argument found in \cite[Section 5]{[DKZ]}. The following result of C. T. C. Wall implies the existence of an embedding $f: M\hookrightarrow \R^8$, and hence the normal bundle $\nu_f(M)$ is trivial.

\begin{theorem}\label{Theorem Wall} \cite[Section 9]{[W]}. Suppose $M$ is a closed smooth simply connected 6-manifold with torsion-free homology and $w_2(M) = 0$. There is an embedding\begin{equation}M\hookrightarrow \R^8\end{equation} if and only if $p_1(M) = 0$.
\end{theorem}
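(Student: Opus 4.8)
I would prove the two implications separately; the forward one is elementary, while the backward one follows Wall's analysis \cite{[W]} of simply connected $6$-manifolds. For the \emph{only if} direction, suppose $f\colon M\hookrightarrow\R^8\subset S^8$. Since $M$ is orientable and of codimension two, its normal bundle $\nu_f(M)$ is an oriented real $2$-plane bundle, which is trivial as soon as its Euler class in $H^2(M;\Z)$ vanishes; and this Euler class is zero because $M$ is null-homologous in $S^8$ (this is the codimension-two triviality already invoked in the proof of Proposition \ref{Proposition Kervaire}, \cite[Chapter VIII. Theorem 2]{[Kirby]}). Hence $TM\oplus\epsilon^2\cong f^{\ast}T\R^8\cong\epsilon^8$ is stably trivial, so every Pontrjagin class of $M$ vanishes; in particular $p_1(M)=0$.

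For the \emph{if} direction, assume $p_1(M)=0$; I would first observe that $M$ is stably parallelizable. Since $M$ is orientable with $w_2(M)=0$, the classifying map $\tau_M\colon M\to B\So$ of the stable tangent bundle lifts to $B\mathrm{Spin}$, and the obstructions to nullhomotoping this lift lie in $H^{k}(M;\pi_k(B\mathrm{Spin}))$; as $B\mathrm{Spin}$ is $3$-connected with $\pi_4(B\mathrm{Spin})\cong\pi_3(\mathrm{Spin})\cong\Z$ and $\pi_5(B\mathrm{Spin})=\pi_6(B\mathrm{Spin})=0$, while $\dim M=6$, the only possibly nonzero obstruction is the class in $H^4(M;\Z)$, namely $\tfrac12 p_1(M)$. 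Because $H_{\ast}(M)$ is torsion free, so is $H^4(M;\Z)$, and $p_1(M)=0$ forces $\tfrac12 p_1(M)=0$; hence $\tau_M$ is nullhomotopic and $M$ is stably parallelizable.

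It then remains to embed a closed simply connected stably parallelizable $6$-manifold $M$ with torsion-free homology into $\R^8$, and here I would follow Wall's argument. Such an $M$ has a handle presentation on $D^6$ with handles only of indices $0,2,3,4$ and a single $6$-handle (no $1$- or $5$-handles, by simple connectivity and Poincar\'e duality), and stable parallelizability lets the attaching maps of the $2$- and $3$-handles be chosen with compatible normal framings. One builds $f\colon M\hookrightarrow\R^8$ by induction over this presentation, carrying along a trivialisation of the rank-two normal bundle: the $0$-handle enters as a flat $D^6\subset\R^6\subset\R^8$; a handle of index $k\le 3$ is attached by extending the embedding over a thickened $(k-1)$-sphere in the current boundary, which is possible because $k-1\le 2$ is small enough that the attaching sphere is unknotted and the residual framing ambiguity is absorbed by the chosen framing; the $4$-handles are treated dually, as $2$-handles on the complementary piece; and the top $6$-handle caps everything off, the only obstruction to completing the codimension-two embedding of $M\setminus\operatorname{int}D^6$ to one of $M$ being detected by $p_1(M)$, hence zero. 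I expect this last step to be the main obstacle: keeping the normal framings consistent throughout, handling the middle-dimensional ($3$-)handles, and checking that the obstruction to closing up is exactly governed by $p_1(M)$. This is precisely where the hypothesis $p_1(M)=0$ is consumed, and making it rigorous is essentially the content of Wall's classification of simply connected $6$-manifolds \cite{[W]}; the hypotheses of simple connectivity, torsion-free homology, and $w_2=0$ are what make that machinery applicable, the torsion-freeness in particular being used above to pass from $p_1=0$ to $\tfrac12 p_1=0$.
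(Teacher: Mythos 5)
The paper does not actually prove this statement: it is quoted as Wall's theorem, with \cite[Section 9]{[W]} standing in for the proof. Measured against that, your attempt does strictly more work, and part of it is genuinely correct: the \emph{only if} direction is complete (an oriented rank-two bundle is classified by its Euler class, the normal Euler class of a closed oriented codimension-two submanifold of $S^8$ vanishes, hence $TM\oplus\epsilon^2$ is trivial and all Pontrjagin classes vanish), and your reduction of the \emph{if} direction to stable parallelizability via the lift to $B\mathrm{Spin}$, with the single obstruction $\tfrac12 p_1(M)\in H^4(M;\Z)$ killed by torsion-freeness, is also sound.

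However, the hard half is not proved. Everything after ``I would follow Wall's argument'' is an outline whose decisive steps are exactly the ones you leave open, as you yourself acknowledge: extending a codimension-two embedding with framed normal bundle over the $3$-handles, the proposal to treat the $4$-handles ``dually, as $2$-handles on the complementary piece'' (which presupposes an embedding of the complementary half that you have not constructed -- building from both ends and matching in the middle is the whole difficulty), and the assertion that the obstruction to capping off with the $6$-handle ``is detected by $p_1(M)$.'' That last claim is unsubstantiated and in fact suspicious: by this stage $p_1(M)=0$ has already been spent to obtain stable parallelizability, so identifying what the residual closing-up obstruction is, and showing it vanishes under the standing hypotheses (simple connectivity, torsion-free homology, $w_2=0$), is precisely the content of Wall's Section 9 rather than something your sketch delivers. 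So as a standalone proof there is a genuine gap in the \emph{if} direction; as a reading of the paper, your citation of Wall for that step is no worse than what the paper itself does, but then the handle-induction narrative should be presented as a gloss on Wall, not as a proof.
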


Let $J'$ be a complex structure on $\nu_f(M)$ that is compatible with the normal orientation induced by the embedding $f$. We obtain a complex structure\begin{equation}(T\R^8|_M, J\oplus J') \cong (TM\oplus \nu_f(M), J\oplus J').\end{equation} and a map $g: M\rightarrow \widetilde{\Gamma}(4)$ as it was discussed in Section \ref{Section AlmCmplxStr}. To be able to invoke Lemma \ref{Lemma Nullhomotopy} and hence prove the existence of the required almost-complex structure, we claim that $g$ is nullhomotopic. Denote by $M^{(i)}$ the $i$-skeleton of $M$ and consider its cell-decomposition\begin{equation}
M^{(0)}\subset M^{(1)}\subset M^{(2)}\subset M^{(3)}\subset M^{(4)}\subset M^{(5)}\subset M^{(6)}\subset M.\end{equation}

The only nontrivial homotopy groups $\pi_k(\widetilde{\Gamma}(4))$ in the range $k\in \{0, 1, 2, 3, 4, 5, 6\}$ are $k = 2, 6$ by (\ref{HomotopyGroups}), and they both are infinite cyclic. Hence, there are two obstructions\begin{equation}\Omega_2(g)\in H^2(M; \pi_2(\Gamma(4))) = H^2(M; \Z)\end{equation}and\begin{equation}\Omega_6(g)\in H^6(M; \pi_6(\Gamma(4))) = H^6(M; \Z)\end{equation} for $g$ to be nullhomotopic. The map $g$ is nullhomotopic over the 2-skeleton $M^{(2)}$ if and only if $\Omega_2(g) = 0$. If this holds, the homotopy to a constant map can be extended over $M^{(i)}$ for $i\in \{3, 4, 5\}$ since $\pi_i(\widetilde{\Gamma}(4)) = 0$. The extension of the homotopy can be further extended over the 6-skeleton $M^{(6)}$ if and only if $\Omega_6(g) = 0$. We now show that both obstructions vanish and we start with the argument for $\Omega_2(g)$, which appears in \cite[Proof of Theorem 5]{[DKZ]}. There is a fibration of classifying spaces \cite[Chapter 1\textsection 2.J]{[Adachi]}, \cite[Appendix A.2]{[Whitehead]}\begin{equation}\label{FibrationCSpaces}\Gamma(4)\overset{\imath}{\hookrightarrow} \Bu(4)\rightarrow \Bs(8)\end{equation} arising from the inclusion $\U(4)\rightarrow \So(8)$ and
the classifying map of the bundle $TM\oplus \nu_f(M)$ is\begin{equation}\imath\circ g: M\rightarrow \Bu(4).\end{equation}

The part of the homotopy exact sequence of (\ref{FibrationCSpaces}) to be considered is\begin{equation}\pi_3(\Bs(8))\rightarrow \pi_2(\Gamma(4))\rightarrow \pi_2(\Bu(4))\rightarrow \pi_2(\Bs(8))\rightarrow \pi_1(\Gamma(4)),\end{equation}which is $0\rightarrow\Z\rightarrow \Z\rightarrow \Z/2\rightarrow 0$ since $\pi_k(BG) = \pi_{k - 1}(G)$. This implies that the induced map $\imath_{\ast}: \pi_2(\Gamma(4))\rightarrow \pi_2(BU(4))$ is the double map in $\Z$. The restriction $(\imath\circ g)|_{M^{(2)}}$ pins down a class $(\imath\circ g)^\ast c_1$ where $c_1\in H^2(\Bu(4)) = \Z$ is a generator. In particular, $2\Omega_2(g) = c_1(TM\oplus \nu_f(M)) = c_1((M, J\oplus J')) = c_1(M, J)$ (cf. \cite[Lemma 15]{[DKZ]}). The absence of torsion and the hypothesis $c_1(M, J)= 0$ imply $\Omega_2(g) = 0$. 

We now adapt the previous argument to prove $\Omega_6(g) = 0$ as follows. Consider the part of homotopy exact sequence of (\ref{FibrationCSpaces}) given by\begin{equation}\pi_7(\Bs(8))\rightarrow \pi_6(\Gamma(4))\rightarrow \pi_6(\Bu(4))\rightarrow \pi_6(\Bs(8))\rightarrow \pi_5(\Gamma(4)).\end{equation}This sequence reduces to $0\rightarrow \Z\rightarrow \Z\rightarrow 0$. The restriction $(\imath\circ g)|_{M^{(6)}}$ pins down a class $(\imath\circ g)^\ast c_3$ where $c_3\in H^6(\Bu(4)) = \Z$ is a generator. In particular, the obstruction $\Omega_6(g)$ vanishes given that the Euler characteristic of $M$ is zero. This implies that the map $g:M\rightarrow \widetilde{\Gamma}(4)$ is homotopic to a constant map, and hence there is an extension $\widetilde{J}: \R^8\rightarrow \widetilde{\Gamma}(4)$ by Lemma \ref{Lemma Nullhomotopy}.
\hfill $\square$

\subsection{Proof of Corollary \ref{Theorem CRParallelizable}}\label{Section ProofCRParallelizable} Corollary \ref{Corollary CRG} and Proposition \ref{Proposition CRParallelizable} imply the claim.
\hfill $\square$

\subsection{Proof of Corollary \ref{Theorem Equivalence}}\label{Section ProofTheoremEquivalence} The result follows from Theorem \ref{Theorem PHP}, Corollary \ref{Corollary CRG}, and Proposition \ref{Proposition CRParallelizable}. 
\hfill $\square$

\subsection{Proofs of Theorem \ref{Theorem G} and Corollary \ref{Theorem 1}}\label{Section ProofG} Proposition \ref{Proposition G} says that there is an embedding $f:M(G, 2n)\hookrightarrow \R^{2n + 2}$, where the tangent bundle $TM(G, 2n)$ is trivial and the fundamental group $\pi_1(M(G, 2n))$ is isomorphic to $G$. Theorem \ref{Theorem G} follows from Theorem \ref{Theorem PHP}. Considering the identification $\R^{2n + 2}\cong \C^{n + 1}$, Corollary \ref{Theorem 1} follows from Corollary \ref{Theorem CRParallelizable}.

\hfill $\square$

\end{document}